
\documentclass{amsart}
\usepackage{latexsym,amsfonts,amsmath,amstext,amssymb,amscd,graphicx}
\usepackage{lineno}

\newtheorem{theorem}{Theorem}[section]

\newtheorem{lmm}[theorem]{Lemma}

\newtheorem{prp}[theorem]{Proposition}
\newtheorem{thm}[theorem]{Theorem}

\theoremstyle{definition}

\newtheorem{rmr}[theorem]{Remark}
\newtheorem{dfn}[theorem]{Definition}
\newcommand{\inv}{^{-1}}
\newcommand{\di}{\displaystyle}
\newcommand{\prim}{^\prime}

\newcommand{\Lgrad}{L = \bigoplus_{g\in G}L_g}
\newcommand{\Agrad}{A = \bigoplus_{g\in G}A_g}
\newcommand{\grad}{=\bigoplus_{g\in G}}
\newcommand{\ZN}{\mathbb{Z}}
\newcommand{\SSn}{S(m;\underline{n})}
\newcommand{\Sn}{S(m;\underline{n})^{(1)}}
\newcommand{\Xn}{X(m;\underline{n})^{(\infty)}}
\newcommand{\Wn}{W(m;\underline{n})}
\newcommand{\HHn}{H(m;\underline{n})}
\newcommand{\Hn}{H(m;\underline{n})^{(2)}}
\newcommand{\KKn}{K(m;\underline{n})}
\newcommand{\Kn}{K(m;\underline{n})^{(1)}}

\newcommand{\Omn}{O(m;\underline{n})}
\newcommand{\Gmn}{\GL(m;\underline{n})}
\newcommand{\Spmn}{\Sp(m;\underline{n})}
\newcommand{\unn}{\underline{n}}
\newcommand{\ut}{\underline{t}}

\newcommand{\un}{\underline{1}}
\newcommand{\xa}{x^{(a)}}
\newcommand{\xb}{x^{(b)}}

\newcommand{\taun}{\tau(\underline{n})}
\newcommand{\veps}{\varepsilon}

\newcommand{\M}{\mathcal{M}(m;\underline{n})}

\newcommand{\Amn}{{A}(m;\underline{n})}\newcommand{\Hf}{\mathcal{H}(m;\underline{n})}
\newcommand{\Xf}{\mathcal{X}(m;\underline{n})}
\newcommand{\Wf}{\mathcal{W}(m;\underline{n})}
\newcommand{\Sf}{\mathcal{S}(m;\underline{n})}
\newcommand{\Kf}{\mathcal{K}(m;\underline{n})}
\newcommand{\Fmn}{\mathcal{F}(m;\underline{n})}
\newcommand{\Tf}{\mathcal{T}}
\newcommand{\wk}{\omega_K}

\newcommand{\wh}{\omega_H}
\newcommand{\ws}{\omega_S}

\DeclareMathOperator{\Id}{\operatorname{Id}}
\DeclareMathOperator{\Der}{\operatorname{Der}}

\DeclareMathOperator{\Sp}{\operatorname{Sp}}

\DeclareMathOperator{\GL}{\operatorname{GL}}
\DeclareMathOperator{\Span}{\operatorname{Span}}

\DeclareMathOperator{\Supp}{\operatorname{Supp}}
\DeclareMathOperator{\Aut}{\operatorname{Aut}}
\DeclareMathOperator{\ad}{\operatorname{ad}}

\begin{document}
\title{Gradings by Groups on Graded Cartan Type\\Lie Algebras}
\author{Jason McGraw}
\begin{abstract}In this paper we describe all gradings by abelian groups without elements of order $p$, where $p>2$ is the characteristic of the base field, on the simple graded Cartan type Lie algebras. 
\end{abstract}
\maketitle 
\section{Introduction}\label{sI}

Let $A$ be an algebra, $G$ a group and let $\Aut A$, $\Aut G$ be the automorphism groups of $A$ and $G$, respectively.

\begin{dfn}\label{d4} A \emph{grading by a group $G$} on an algebra $A$, also called a {\em $G$-grading}, is a decomposition  $A = \bigoplus_{g\in G} A_g$ where each $A_g$ is a subspace such that $A_{g'}A_{g''}\subset A_{g'g''}$
for all $g',\, g''\in G$. For each $g\in G$, we call the subspace $A_g$ the {\em homogeneous space} of degree $g$.  A subspace $V$ of $A$ is called {\em graded} if $V=\bigoplus_{g\in G}(V\cap A_g)$.
The set $\Supp A=\{ g\in G\, |\, A_g \neq 0\}$ is called the {\em support} of the grading.
\end{dfn}

For a grading by a group $G$ on a \emph{simple} Lie algebra $L$, it is well known that the subgroup generated by the support is abelian \cite[Lemma 2.1]{typea}. For the remainder of the paper we always assume without loss of generality that the group is generated by the support. If $A$ is finite-dimensional, this assumption implies that $G$ is finitely generated.

\begin{dfn}\label{d5}
Two gradings $\di\Agrad$ and
$A=\di\bigoplus_{h\in G}A_h\prim$ of an algebra $A$ are called {\em equivalent} if there exist $\Psi\in\Aut(A)$ and $\theta\in\Aut(G)$ such that $\Psi(A_g)=A_{\theta(g)}\prim$
for all $g\in G$.  If $\theta$ is the identity, we call the gradings {\em isomorphic}.\end{dfn}

\begin{dfn}\label{l1}Let $A = \bigoplus_{g\in G}A_g$ be a grading by a group $G$ on an algebra $A$
and $\varphi$ a group homomorphism of $G$ onto $H$. The {\em coarsening of the $G$-grading induced by $\varphi$} is the $H$-grading defined by $A = \bigoplus_{h\in H}\overline{A}_{h}$
where $$\overline{A}_{h}=\di\bigoplus_{g\in G,\ \varphi(g)=h}A_{g}.$$\end{dfn}

The task of finding all gradings on simple Lie algebras by abelian groups in the case of algebraically closed fields of characteristic zero is almost complete ---  see \cite{Eld} and also \cite{typebcd,gtwo,typea,typeg,typef,typed,lie2}. In the case of positive characteristic, a description of gradings on the classical simple Lie algebras, with certain exceptions, has been obtained in \cite{BK}, \cite{posb}. In the case of simple Cartan type Lie algebras, the gradings by $\ZN$ have been described in \cite{slafpc}. All of them, up to isomorphism, fall into the category of what we call \emph{standard} gradings (which are coarsenings of the canonical $\ZN^k$-gradings) --- see Definitions \ref{d8} and \ref{d9}. This paper will deal with gradings on the simple graded Cartan type Lie algebras by arbitrary abelian groups without $p$-torsion in the case where the base field $F$ (which is always assumed to be algebraically closed) has characteristic $p>2$. We restrict ourselves to the \emph{graded} Cartan type Lie algebras (i.e., those that have canonical $\ZN$-gradings).

We use the notation of \cite{slafpc}, which is our standard reference for the background on Cartan type Lie algebras.

Our main result is the following.

\begin{thm}\label{tM}
Let $L$ be a simple graded Cartan type Lie algebra over an algebraically closed field of characteristic $p>2$. If $p=3$, assume that $L$ is not isomorphic to $W(1;\un)$ or $H(2;(1,n_2))^{(2)}$. Suppose $L$ is graded by a group $G$ without elements of order $p$. Then the grading is isomorphic to a standard $G$-grading.
\end{thm}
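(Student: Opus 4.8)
The plan is to recast the $G$-grading as an action of an algebraic group and to exploit the absence of $p$-torsion. A $G$-grading on $L$ is the same datum as an action of the diagonalizable group scheme $\mathbf{G}=\operatorname{Diag}(G)=\operatorname{Spec}FG$ on $L$ by automorphisms, and since the support generates $G$ this action is faithful, so $\mathbf{G}$ is (isomorphic to) a closed subgroup scheme of the automorphism group scheme of $L$. Because $G$ is a finitely generated abelian group with no element of order $p$, we have $G\cong\ZN^r\times E$ with $E$ finite of order prime to $p$, and hence $\mathbf{G}\cong\mathbb{G}_m^r\times E$ is \emph{smooth}; consequently the action factors through the reduced algebraic group $\Aut(L)$, where genuine conjugacy arguments are available. (Linear reductivity of $\mathbf{G}$, which holds for every diagonalizable group, will be used repeatedly.) This smoothness is where the hypothesis on $G$ enters, and without it one really does encounter nonstandard gradings, which is why the statement restricts to $p$-torsion-free $G$.

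Next I would reduce everything to the divided power algebra. For $L=\Wn$ one has $\Aut(L)\cong\Aut_c(\Omn)$, and for $L=\Sn$, $\Hn$, or $\Kn$ every automorphism of $L$ extends to an automorphism of the ambient $\Wn$ preserving the relevant divergence, Poisson, or contact structure up to a scalar (see \cite{slafpc} and the references there). Thus the $\mathbf{G}$-action extends to $\Omn$, i.e.\ gives a $G$-grading of the commutative divided power algebra compatible with the canonical one on $L$. As $\mathfrak{m}=(x_1,\dots,x_m)$ is the unique maximal ideal it is graded, so $\mathbf{G}$ respects the full $\mathfrak{m}$-adic filtration, which on the Lie side is the canonical filtration of $L$. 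The structural input I would then invoke is that $\Aut(L)$ (equivalently $\Aut_c(\Omn)$ together with the extra invariant structure in the $S$, $H$, $K$ cases) admits a semidirect decomposition $\Aut(L)=\mathcal{L}\ltimes\mathcal{U}$, where $\mathcal{U}$ is the unipotent radical and $\mathcal{L}$ is an explicit reductive group of classical type: a product of general linear groups for $W$ and $S$ (twisted by a volume scaling in the $S$ case), and a conformal symplectic group, possibly with one extra one-dimensional factor in the $K$ case, for $H$ and $K$; its maximal torus $T_{\mathrm{can}}$ induces precisely the canonical $\ZN^k$-grading of $L$. It is exactly here that $p=3$ with $L\cong W(1;\un)$ or $L\cong H(2;(1,n_2))^{(2)}$ must be excluded, for then the reductive part is exceptionally large (for instance $W(1;\un)\cong\mathfrak{sl}_2$ at $p=3$).

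Granting this, the proof is finished in two conjugation steps. First, $\mathcal{U}$ carries a $\mathbf{G}$-stable filtration with vector-group quotients, so $H^1(\mathbf{G},\mathcal{U})=0$ by linear reductivity of $\mathbf{G}$; comparing the inclusion $\mathbf{G}\hookrightarrow\Aut(L)$ with its composite $\mathbf{G}\to\Aut(L)\to\Aut(L)/\mathcal{U}=\mathcal{L}$ shows these two homomorphisms are conjugate by some $u\in\mathcal{U}(F)$, and replacing the grading by its image under $u\in\Aut(L)$ — an \emph{isomorphism} of gradings — we may assume $\mathbf{G}\subseteq\mathcal{L}$. Second, I would diagonalize $\mathbf{G}$ inside $\mathcal{L}$: a diagonalizable subgroup of a product of general linear groups is toral (decompose the faithful module on the degree-one generators into $\mathbf{G}$-weight spaces and take an adapted basis), and the same holds for a conformal symplectic factor, because an alternating form vanishes on lines, so the $\mathbf{G}$-weight spaces break into mutually dual pairs $V_\lambda\oplus V_{-\lambda}$ together with even-dimensional symplectic self-dual pieces (the trivial weight space and the weight spaces attached to weights of order two) on which $\mathbf{G}$ acts by scalars; assembling symplectic bases of the pieces exhibits $\mathbf{G}$ inside a maximal torus. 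Hence, after a further conjugation by an element of $\Aut(L)$, we may assume $\mathbf{G}\subseteq T_{\mathrm{can}}$. Dualizing the closed immersion $\mathbf{G}\hookrightarrow T_{\mathrm{can}}$ yields a surjection $\ZN^k=X^*(T_{\mathrm{can}})\twoheadrightarrow X^*(\mathbf{G})=G$, and the $G$-grading on $L$ is the coarsening of the canonical $\ZN^k$-grading induced by this surjection, i.e.\ a standard $G$-grading in the sense of Definitions \ref{d8} and \ref{d9}. Since every modification above was carried out by an automorphism of $L$, the original grading is isomorphic to it, which is the assertion.

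I expect the main obstacle to be the structural input just quoted: establishing the Levi-type decomposition $\Aut(L)=\mathcal{L}\ltimes\mathcal{U}$ in characteristic $p$, pinning down $\mathcal{L}$ precisely in the Hamiltonian and contact cases, and checking that the cohomological vanishing reaches across the entire unipotent radical — and it is exactly this analysis that fails for $W(1;\un)$ and $H(2;(1,n_2))^{(2)}$ at $p=3$. A secondary nuisance is the bookkeeping in the reduction to $\Omn$ for the $S$, $H$, $K$ types: one must verify that the relevant invariant form is itself homogeneous for the $G$-grading, so that it is preserved under each of the conjugations performed.
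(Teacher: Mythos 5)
Your strategy is viable but it is genuinely different from the paper's. The paper never constructs a Levi decomposition of $\Aut L$: it invokes Platonov's theorem (Proposition \ref{t10}) to place the quasi-torus $\eta(\widehat G)$ in the normalizer of a maximal torus, computes that normalizer explicitly inside $\Aut_c\Omn$ (Lemmas \ref{l22} and \ref{l23}: monomial, flag-respecting automorphisms, plus the extra term in $\psi(x_m)$ for type $K$), and then conjugates into $\Tf_X$ by hand --- block-diagonalization along the flag $\Fmn$ for $W,S$ (Proposition \ref{p1}), a flag-compatible symplectic Gram--Schmidt producing an eigenbasis (Lemma \ref{l24}, Proposition \ref{p2}), and an extension lemma from $\mathcal{H}(2r;\unn\prim)$ to $\Kf$ for type $K$ (Lemma \ref{l25}, Proposition \ref{p10}). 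Your route replaces the Platonov-plus-normalizer analysis by the two steps ``conjugate into a Levi subgroup via $H^1(\mathbf{G},\mathcal{U})=0$'' and ``a quasi-torus in the reductive part is toral''; this is closer to the strategy used elsewhere in the literature for the restricted case, and it buys a cleaner conceptual skeleton at the price of heavier structural input about $\Aut L$.

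That structural input is where your proposal has real gaps, beyond the ones you flag. First, in characteristic $p$ a Levi decomposition $\Aut(L)=\mathcal{L}\ltimes\mathcal{U}$ is not available by general theory (Mostow fails), so it must be extracted from Strade's explicit description of $\Aut_c\Omn$ \cite{slafpc}: the natural splitting is $\Aut_c\Omn=\Aut_0\Omn\ltimes\mathcal{U}$, and the ``linear part'' $\Aut_0\Omn\cong\Gmn$ (resp.\ $F^\times\Spmn$ in the Hamiltonian case) is itself \emph{not} reductive unless all $n_i$ coincide, because of the flag condition (\ref{flag}); so your identification of $\mathcal{L}$ as a product of general linear groups, resp.\ a conformal symplectic group, is only correct after passing to a Levi of these flag stabilizers, and all subsequent conjugations must be carried out by flag-respecting (hence genuine) automorphisms of $\Omn$. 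Second, the assertion that a diagonalizable subgroup of the symplectic factor lies in a maximal torus is not formal --- quasi-tori in reductive groups in general only normalize maximal tori (e.g.\ the Klein four-group in $\mathrm{PGL}_2$) --- so you need either a Steinberg-type argument using simple connectedness of $\Sp$, or the explicit construction of a symplectic eigenbasis adapted to $\Fmn$, which is exactly the content of the paper's Lemma \ref{l24}; your weight-space sketch is essentially that argument, but the flag compatibility must be built in. Third, the contact case needs its own treatment (your $\mathcal{L}$ ``with one extra factor'' is not pinned down; the paper handles it by restricting to $O(2r;\unn\prim)$ and re-extending via Lemma \ref{l25}), and there one must rule out the terms $x_lx_{l\prim}$ in the image of $x_m$, which is not visible at the level of the reductive quotient. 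So: correct in outline, different in method, but the central facts you assume (existence and identification of $\mathcal{L}$, torality in the symplectic and contact cases) are precisely where the paper's detailed work lives and would still have to be done.
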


The correspondence between the gradings on an algebra by finite abelian groups of order coprime to $p$ and finite abelian subgroups of automorphisms of this algebra is well known. Using the theory of algebraic groups, this extends to infinite abelian groups. Namely, a grading on an algebra $\Lgrad$ by a finitely generated abelian group without elements of order $p$ gives rise to an embedding of the dual group $\widehat{G}$ into $\Aut L$ using the following action:
$$\chi*y=\chi(g)y,\quad\mbox{for all }y\in L_g,\quad g\in G,\quad\chi\in\widehat{G}.$$
We will denote this embedding by $\eta:\widehat{G}\to\Aut L$, so
$$\eta(\chi)(y)=\chi*y.$$ 
If $L$ is finite-dimensional, then $\Aut L$ is an algebraic group, and the image $\eta(\widehat{G})$ belongs to the class of algebraic groups called quasi-tori. Recall that a \emph{quasi-torus} is an algebraic group that is abelian and that consists of semisimple elements. Conversely, given a quasi-torus $Q$ in $\Aut L$, we obtain the eigenspace decomposition of $L$ with respect to $Q$, which is a grading by the group of characters of $Q$, $G=\mathfrak{X}(Q)$.

In this paper, $L$ is a simple graded Cartan type Lie algebra, i.e., one of the following algebras: $\Wn$, $\Sn$, $\Hn$, $\Kn$ where $m$ is a positive integer and $n=(n_1,\dots,n_m)$ is an $m$-tuple of positive integers --- see the definitions in the next section. We will denote the type of $L$ by $\Xn$. The automorphism group of $L$ can be regarded as a subgroup of $\Aut_c\Omn$, the group of continuous automorphisms of the commutative divided power algebra $\Omn$. Since $\eta(\widehat{G})$ is a quasi-torus, a result by Platonov \cite{plat} tells us that $\eta(\widehat{G})$ is contained in the normalizer of a maximal torus. Starting from this result, we show that, in fact, $\eta(\widehat{G})$ is conjugate to a subgroup of the standard maximal torus $T_{X}$ (specific for each type $\Xn$ of simple graded Cartan type Lie algebra), which is responsible for the standard $\ZN^k$-grading on $L$ where $k$ depends on the type $X$. 

Unless it is stated otherwise, we denote by $a$ and $b$ some $m$-tuples of non-negative integers and by $i,j,k,l,q,r$ some integers. Any subset denoted by a calligraphic letter, for example $\Wf$, is a subset of $\Aut_c\Omn$.

\section{Cartan Type Lie Algebras and Their Standard Gradings}

In this section we introduce some basic definitions, closely following \cite[Chapter 2]{slafpc}. We start by defining the graded Cartan type Lie algebras $\Wn$, $\Sn$, $\Hn$, $\Kn$.

\begin{dfn}\label{d1}
Let $\Omn$ be the commutative algebra $$\Omn:=\left\{\di\sum_{0\leq a\leq\taun}\alpha(a)\xa\;|\;\alpha(a)\in F\right\}$$ over a field $F$ of characteristic $p$, where $\taun=(p^{n_1}-1,\dots,p^{n_m}-1)$, with multiplication $$\xa\xb=\binom{a+b}{a}x^{(a+b)},$$
where $\di\binom{a+b}{a}=\di\prod_{i=1}^m\binom{a_i+b_i}{a_i}$.

For $1\leq i\leq m$, let $\epsilon_i:=(0,\dots,0,1,0\dots, 0)$, where the 1 is at the $i$-th position, and let $x_i:=x^{(\epsilon_i)}$.
\end{dfn}

There are standard derivations on $\Omn$ defined by $\partial_i(\xa)=x^{(a-\veps_i)}$ for $1\leq i\leq m$. 

\begin{dfn}\label{d2} Let $\Wn$ be the Lie algebra $$\Wn:=\left\{\di\sum_{1\leq i\leq m}f_i\partial_i\;|\;f_i\in\Omn\right\}$$ with the commutator defined by $$[f\partial_i,g\partial_j]=f(\partial_i g)\partial_j-g(\partial_jf)\partial_i,\quad f,g\in\Omn.$$
\end{dfn} 

The Lie algebras $\Wn$ are called \emph{Witt algebras}. $\Wn$ is a subalgebra of $\Der\Omn$, the Lie algebra of derivations of $\Omn$.

The remaining graded Cartan type Lie algebras are subalgebras of $\Wn$. When dealing with Hamiltonian and contact algebras  in $m$ variables (types $H(m;\underline{n})$ and $K(m;\underline{n})$ below), it is useful to introduce the following notation: 
$$i\prim=\left\{\begin{array}{ll}i+r,&\mbox{if }1\leq i\leq r\\
i-r,&\mbox{if }r+1\leq i\leq 2r,\\
\end{array}\right.$$
$$\sigma(i)=\left\{\begin{array}{rl}1,&\mbox{if }1\leq i\leq r\\
-1,&\mbox{if }r+1\leq i\leq 2r,\\
\end{array}\right.$$
where $m=2r$ in the case of $H(m;\unn)$ and $2r+1$ in the case of $K(m;\unn)$. Note that we do not define $m\prim$ or $\sigma(m)$ if $m=2r+1$. We will also need the following differential forms --- see \cite[Section 4.2]{slafpc}.

$$\begin{array}{llll}
\ws&:=&dx_1\wedge\dots\wedge dx_m,\quad\quad\quad\quad\quad& m\geq3,\\
\wh&:=&\di\sum_{i=1}^rdx_i\wedge dx_{i\prim},&m=2r,\\
\wk&:=&dx_m+\di\sum_{i=1}^{2r}\sigma(i)x_idx_{i\prim},&m=2r+1.
\end{array}$$

\begin{dfn}\label{d3}
We define the {\em special, Hamiltonian} and {\em contact algebras} as follows:
$$\begin{array}{lllll}
\SSn&:=&\{D\in \Wn\;|\;D(\ws)=0\},&m\geq3,\\
\HHn&:=&\{D\in \Wn\;|\;D(\wh)=0\},&m=2r,\\
\KKn&:=&\{D\in \Wn\;|\;D(\wk)\in\Omn\wk\},&m=2r+1,\\
\end{array}$$
respectively.\end{dfn}

The algebras of Definitions \ref{d2} and \ref{d3}, as well as their derived subalgebras, are collectively referred to as \emph{graded Cartan type Lie algebras}.

It is known that the Lie algebras $\Wn$ are simple, but $\SSn$ and $\HHn$ are not simple, and $\KKn$ are simple if and only if $p$ divides $m+3$. The first derived algebras $\Sn$ and $\Kn$, and second derived algebras $\Hn$ are simple. The Cartan type Lie algebras defined above are called graded, because they have a canonical $\ZN$-grading, defined by declaring $$\deg(x^{(a)}\partial_i)=a_1+\cdots+a_m-1$$
 for types $\Wn$, $S(m;\unn)$, $H(m;\unn)$, and $$\deg(x^{(a)}\partial_i)=a_1+\cdots+a_{m-1}+2a_m-1-\delta_{i,m}$$
for $K(m;\unn)$. The $\ZN$-grading on $\Wn$ is a coarsening of the following $\ZN^m$-grading.

\begin{dfn}\label{d6}
The $\ZN^m$-gradings on $\Omn$ and $\Wn$,	

$$\Omn=\bigoplus_{a\in\ZN^m}\Omn_{a},$$	
$$\Wn=\bigoplus_{a\in\ZN^m}\Wn_{a},$$	
where $$\Omn_{a}=\Span\{x^{(a)}\},$$	
$$\Wn_a=\Span\{x^{(a+\epsilon_k)}\partial_k\;|\;1\leq k\leq m\},$$
are called the {\em canonical $\ZN^m$-gradings on} $\Omn$ and $\Wn$, respectively.
\end{dfn}\label{d7}

Note that in the above grading on $\Wn$ the support includes tuples with negative entries. For example $\Wn_{-\veps_i}=\Span\{\partial_i\}$. The algebras $S(m;\underline{n})$ and $\Sn$ are graded subspaces in the canonical $\ZN^m$-grading on $\Wn$, so they inherit the {\em canonical $\ZN^m$-grading}.

\begin{dfn}\label{d8}
Let $G$ be an abelian group, $L=\Wn$ or $\Sn,$ and $\varphi$ a homomorphism $\ZN^m\to G$. The decompositions $\Omn=\bigoplus_{g\in G} O_g$, $L=\bigoplus_{g\in G} L_g$, 
given by
$$O_g=\Span\{\xa\;|\;\varphi(a)=g\},$$
$$L_g=\Span\{\xa\partial_k\;|\;1\leq k\leq m,\, \varphi(a-\epsilon_k)=g\}\cap L,$$ are $G$-gradings on $\Omn$ and $L$, respectively. We call them the \emph{standard $G$-gradings induced by $\varphi$} on $\Omn$ and $L$, respectively. 
We will refer to the standard $G$-grading induced by any $\varphi$ as a \emph{standard $G$-grading} when $\varphi$ is not specified.  
\end{dfn}

Let $L=\Wn$ and let $\Lgrad$ be the standard $\ZN^m$-grading induced by $\varphi$. Let $\varphi(\veps_i)=g_i\in G$. The corresponding action of $\widehat G$ on $L$ is defined by $$\chi*(\xa\partial_i)=\chi(\varphi(a-\veps_i))\xa\partial_i=\chi(g_1)^{a_1}\cdots\chi(g_m)^{a_m}\chi(g_i)^{-1}\xa\partial_i,$$
for all $\chi\in\widehat{G}$. Hence $\eta(\widehat G)$ is a subgroup of the torus $T_W$,
$$T_W:=\{\Psi\in\Aut\Wn\;|\;\Psi(\xa\partial_k)=t_1^{a_1}\cdots t_m^{a_m}t_k^{-1}\xa\partial_k,\ t_j\in F^\times\}.$$
Conversely, if $Q$ is a quasi-torus in $T_W$, it defines a standard grading on $L$ by $G=\mathfrak{X}(Q)$, the group of characters of $Q$.

In particular, the standard $\ZN^m$-grading on $\Wn$ corresponds to $Q=T_W$. Note that $T_W$ preserves the subalgebra $\Sn$, and the restriction of $T_W$ to $\Sn$ is an isomorphic torus in $\Aut\Sn$.

\begin{lmm}\label{lt}\cite[Section 7.4]{slafpc}
The following are maximal tori of $\Aut\Wn$, $\Aut\Sn$, $\Aut\Hn$ and $\Aut\Kn$, respectively: 
$$\begin{array}{lll}T_W&=&T_S=\{\Psi\in\Aut\Wn\;|\;\Psi(\xa\partial_i)=t_1^{a_1}\cdots t_m^{a_m}t_i\inv\xa\partial_i,\ t_i\in F^\times\},\\
\\ 
T_H&=&\{\Psi\in\Aut\Wn\;|\;\Psi(\xa\partial_i)=t_1^{a_1}\cdots t_m^{a_m}t_i\inv\xa\partial_i,\ t_i\in F^\times,\\ 
&&\ \ t_it_{i\prim}=t_jt_{j\prim},\ 1\leq i,j\leq r\},\\
\\
T_K&=&\{\Psi\in\Aut\Wn\;|\;\Psi(\xa\partial_i)=t_1^{a_1}\cdots t_m^{a_m}t_i\inv\xa\partial_i,\ t_i\in F^\times,\\ &&\ \  t_it_{i\prim}=t_jt_{j\prim}=t_{m},\ 1\leq i,j\leq r\}.\end{array}$$$\hfill\square$
\end{lmm}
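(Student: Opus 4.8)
The plan is: (1) verify that $T_X$ is a torus in $\Aut L$; (2) reduce maximality to a computation of the centralizer $C:=C_{\Aut L}(T_X)$; (3) carry out that computation type by type using the canonical grading. Step (1) is routine: $T_X$ is the image of a morphism of algebraic groups from a subtorus of $(F^\times)^m$ --- all of $(F^\times)^m$ for types $W$ and $S$, and the subtorus defined by $t_it_{i\prim}=t_jt_{j\prim}$, resp.\ $t_it_{i\prim}=t_jt_{j\prim}=t_m$, for types $H$ and $K$; this morphism is injective since $\Psi(\partial_i)=t_i\inv\partial_i$ recovers each $t_i$ (and $t_m=t_1t_{1\prim}$ for $K$), and its image is connected, abelian, and consists of semisimple elements, being simultaneously diagonal in the basis $\{\xa\partial_k\}$. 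The resulting ranks are $m$ for $W,S$ and $r+1$ for $H,K$. For step (2): every torus of $\Aut L$ containing $T_X$ lies in $C$, so $T_X$ is maximal if and only if the identity component $C^\circ$ --- in which $T_X$ is a central torus --- has $T_X$ as its maximal torus; it therefore suffices to compute $C$.

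For step (3), since $T_X$ acts on $L$ through (a coarsening of) the canonical $\ZN^m$-grading, any $\Psi\in C$ preserves every $T_X$-weight space of $L$, and the key point is that enough of these are one-dimensional. Take $L=\Wn$: the $T_W$-weight spaces of weights $-\veps_i$ and $\taun-\veps_i$ are $F\partial_i$ and $Fx^{(\taun)}\partial_i$, so $\Psi(\partial_i)=\lambda_i\partial_i$ and $\Psi(x^{(\taun)}\partial_i)=\mu_i x^{(\taun)}\partial_i$ for scalars $\lambda_i,\mu_i\in F^\times$; moreover the $2m$ elements $\partial_i$ and $x^{(\taun)}\partial_i$ generate $\Wn$, because repeated bracketing of $x^{(\taun)}\partial_k$ against the $\partial_l$ produces every $\xa\partial_k$. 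Hence $\Psi$ is determined by $(\lambda_1,\dots,\lambda_m,\mu_1,\dots,\mu_m)$, and imposing that $\Psi$ be a Lie algebra automorphism --- e.g.\ $\Psi[\partial_i,x_i\partial_k]=[\Psi\partial_i,\Psi(x_i\partial_k)]$, which forces each $\mu_k$ to be a fixed monomial in the $\lambda_j$ --- pins $\Psi$ down to the unique element of $T_W$ with parameters $t_i=\lambda_i\inv$. Thus $C=T_W$, which is therefore maximal. The case $L=\Sn$ follows from $T_S=T_W|_{\Sn}$ by the same argument applied to the generators $D_{ij}(x^{(a)})$ of $\Sn$ that span one-dimensional $T_S$-weight spaces.

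The Hamiltonian and contact cases follow the same scheme but are where the real work lies. The weight lattice of $T_X$ is now $\ZN^m$ modulo the characters killing $T_X$ --- the sublattice generated by $\veps_i+\veps_{i\prim}-\veps_j-\veps_{j\prim}$, together with the $\veps_m$-relations for $K$ --- and the Hamiltonian, resp.\ contact, generators of $L$, which are \emph{not} $T_W$-homogeneous, become $T_X$-homogeneous precisely because of the defining relations among the $t_i$; conversely, tracing through the automorphism conditions on a centralizing $\Psi$ shows those same relations are forced, which explains the shape of $T_H$ and $T_K$. Two points then need care, and I expect (ii) to be the main obstacle: (i) producing a generating set of $\Hn$, resp.\ $\Kn$, lying in one-dimensional $T_X$-weight spaces and closing the consistency computation; and (ii) because now $\dim T_X=r+1<m$, the centralizer $C$ need not coincide with $T_X$, so one must additionally show that $C^\circ=T_X\cdot U$ with $U$ unipotent --- equivalently, that the degree-zero component of $\Der L$ with respect to the $T_X$-grading is $\mathrm{Lie}(T_X)$ together with ad-nilpotent derivations --- which would be read off from the known descriptions of $\Der\Hn$ and $\Der\Kn$.
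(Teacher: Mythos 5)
The paper does not actually prove this lemma: it is quoted verbatim from Strade \cite[Section 7.4]{slafpc}, so there is no internal argument to compare against, and any self-contained proof is by definition a different route. Within your attempt, the reduction of maximality to the centralizer $C=C_{\Aut L}(T_X)$ and the type $W$ computation are sound: the $T_W$-weight spaces $F\partial_i$ and $Fx^{(\taun)}\partial_i$ are indeed one-dimensional, they generate $\Wn$ under repeated brackets with the $\partial_l$, and a relation such as $[x_k\partial_k,x^{(\taun)}\partial_k]=(p^{n_k}-2)x^{(\taun)}\partial_k$ (nonzero since $p>2$) forces the scalars $\mu_k$ to be the required monomials in the $\lambda_j$, giving $C=T_W$. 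The case of $\Sn$ is only sketched but is of the same nature.

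The genuine gap is exactly where you predict it: types $H$ and $K$, which are the only cases in which the lemma is nontrivial and for which the paper leans on Strade, are left as an outline with the two decisive steps deferred. For (i), it is not routine to exhibit generators of $\Hn$ or $\Kn$ lying in one-dimensional $T_X$-weight spaces: most $\Tf_H$- and $\Tf_K$-weight spaces are multi-dimensional (compare the paper's own Lemma \ref{l23}, where the eigenspace $R_{\veps_m}$ has dimension $r+1$), and the natural ``top'' candidates are problematic because, e.g., the Hamiltonian element $\sum_{i}\sigma(i)\partial_i(x^{(\taun)})\partial_{i'}$ does not lie in the second derived algebra $\Hn$, so a correct generating set must be chosen and both its one-dimensionality and the closing consistency relations verified. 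For (ii), since $\dim T_X=r+1<m$, the centralizer genuinely can exceed $T_X$ (weight-zero directions such as those mixing $x_m$ with the products $x_ix_{i'}$ in the contact case), so maximality does not follow from the centralizer computation alone; your proposed remedy --- reading off from $\Der\Hn$ and $\Der\Kn$ that the degree-zero part is $\mathrm{Lie}(T_X)$ plus nilpotents --- is not carried out, and in characteristic $p$ it needs justification beyond a citation of $\Der L$, because $\mathrm{Lie}(\Aut L)$ may be a proper subalgebra of $\Der L$ and centralizers need not be smooth, so bounding tori of $C$ by a Lie-algebra computation requires an extra argument (or should be replaced by a direct argument about how a larger torus would refine the $T_X$-eigenspace decomposition). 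As it stands, the lemma is proved only for type $W$ (and essentially $S$); for $H$ and $K$ the attempt is a plausible program, not a proof, and citing Strade as the paper does remains necessary.
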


We are now ready to define standard $G$-gradings on $\Hn$ and $\Kn$. They are obtained by coarsening the canonical $\ZN^m$-grading on $\Wn$. The homomorphism $\varphi:\ZN^m\to G$ must satisfy certain conditions in order for $\eta(\widehat G)$ to preserve $\Hn$ and $\Kn$, respectively.

The {\em canonical $\ZN^{r+1}$-grading on $H(2r;\underline{n})$} is the restriction of a coarsening of the $\ZN^{m}$-grading on $W(2r;\unn)$, defined as follows. Let $m=2r$ and let $$\phi_H:\ZN^m\to\ZN^m/\langle\veps_i+\veps_{i\prim}=\veps_j+\veps_{j\prim}\, |\, 1\leq i<j\leq r\rangle$$
be the quotient map. Then the coarsening of the canonical $\ZN^m$-grading on $\Wn$ induced by $\phi_H$ is a $\ZN^{r+1}$-grading, which restricts to the subalgebras $H(m;\underline{n})$, $H(m;\underline{n})^{(1)}$ and $\Hn$.

Similarly, the {\em canonical $\ZN^{r+1}$-grading on $K(2r+1;\underline{n})$} is the restriction of a coarsening of the $\ZN^{m}$-grading on $W(2r+1;\unn)$, defined as follows. Let $m=2r+1$ and $$\phi_K:\ZN^m\to\ZN^m/\langle\veps_i+\veps_{i\prim}=\veps_m\, |\, 1\leq i\leq r\rangle$$
be the quotient map. Then the coarsening of the canonical $\ZN^m$-grading on $\Wn$ induced by $\phi_K$ is a $\ZN^{r+1}$-grading, which restricts to the subalgebras $K(m;\underline{n})$ and $\Kn$. 

\begin{dfn}\label{d9}
Let $L=\Hn$ or $\Kn$ and $X=H$ or $K$, respectively. Let
$G$ be an abelian group, and $\theta$ a group homomorphism from $\varphi_X(\ZN^m)$ to $G$. The decomposition $L=\bigoplus_{g\in G} L_g$, 
given by
$$L_g=\Span\{\xa\partial_k\;|\;1\leq k\leq m,\, \theta\varphi_X(a-\epsilon_k)=g\}\cap L,$$ is a $G$-grading on $L$. We call it the \emph{standard $G$-grading on $L$ induced by $\theta$}, or just a standard $G$-grading on $L$ if $\theta$ is not specified.
\end{dfn}

We can summarize the above discussion as follows.

\begin{lmm}\label{le} Let $L=\Wn$, $\Sn$, $\Hn$ or $\Kn$. A grading by a group $G$ on $L$ is a standard $G$-grading if and only if we have $\eta(\widehat{G})\subset T_X$ where $X=W$, $S$, $H$ or $K$, respectively.$\hfill\square$\end{lmm}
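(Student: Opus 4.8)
The plan is to prove both directions by unwinding the definitions of the standard gradings (Definitions \ref{d8} and \ref{d9}) and comparing them with the explicit descriptions of the tori $T_W=T_S$, $T_H$, $T_K$ in Lemma \ref{lt}. The key observation to isolate first is the general principle, already used in the text for $L=\Wn$: if a grading $L=\bigoplus_g L_g$ is standard, then every homogeneous component $L_g$ is spanned by monomials $\xa\partial_k$, and the associated action of $\widehat G$ scales each $\xa\partial_k$ by a scalar depending only on $\chi$ and the exponent data, i.e. $\eta(\chi)$ acts diagonally in the monomial basis $\{\xa\partial_k\}$. Conversely, any $\Psi\in\Aut L$ that acts diagonally in this basis, with eigenvalues of the multiplicative form $t_1^{a_1}\cdots t_m^{a_m}t_k^{-1}$ for suitable $t_i\in F^\times$, lies in $T_X$ for the appropriate $X$. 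So the lemma reduces to: \emph{standard $G$-grading} $\iff$ \emph{$\eta(\widehat G)$ acts diagonally in the monomial basis with eigenvalues of torus type}, and the latter is exactly membership in $T_X$.

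For the forward direction I would argue type by type. For $L=\Wn$ or $\Sn$: a standard $G$-grading is induced by a homomorphism $\varphi:\ZN^m\to G$; setting $g_i=\varphi(\veps_i)$, the computation displayed in the text shows $\chi*(\xa\partial_i)=\chi(g_1)^{a_1}\cdots\chi(g_m)^{a_m}\chi(g_i)^{-1}\xa\partial_i$, so putting $t_j=\chi(g_j)\in F^\times$ gives $\eta(\chi)\in T_W=T_S$. For $L=\Hn$: a standard $G$-grading is induced by $\theta:\phi_H(\ZN^m)\to G$, so the scalars attached to $\xa\partial_i$ are $\chi(\theta\phi_H(a-\veps_i))$; writing $t_j=\chi(\theta\phi_H(\veps_j))$, the defining relations $\veps_i+\veps_{i\prim}=\veps_j+\veps_{j\prim}$ in the domain of $\theta$ force $t_it_{i\prim}=t_jt_{j\prim}$, which is precisely the condition defining $T_H$; one should also check the support/negative-exponent bookkeeping is consistent, but this is the same monomial indexing as for $\Wn$ restricted to $\Hn$. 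The case $L=\Kn$ is identical, with the relation $\veps_i+\veps_{i\prim}=\veps_m$ yielding $t_it_{i\prim}=t_m$ as in $T_K$.

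For the converse, suppose $\eta(\widehat G)\subset T_X$. Then every $\eta(\chi)$ is diagonal in the monomial basis $\{\xa\partial_k\}\cap L$, so the simultaneous eigenspace decomposition of $L$ under $\eta(\widehat G)$ — which is exactly the $G$-grading we started from, since $G$ is generated by its support and $\eta$ is the corresponding embedding of $\widehat G$ — has each $L_g$ spanned by monomials. It remains to produce the homomorphism $\varphi$ (resp. $\theta$) realizing this as a standard grading: the monomial $\xa\partial_k$ lies in $L_g$ iff $\chi*(\xa\partial_k)=\chi(g)\xa\partial_k$ for all $\chi$, i.e. iff the character $\chi\mapsto t_1^{a_1}\cdots t_m^{a_m}t_k^{-1}$ (with $t_i=t_i(\chi)$ the torus parameters of $\eta(\chi)$) equals evaluation at $g$; the assignment $a\mapsto g$ is then additive on $\ZN^m$ (resp. factors through $\phi_X$ because of the defining relations among the $t_i$ on $T_X$), defining $\varphi$ (resp. $\theta$), and by construction the resulting standard grading coincides with the given one.

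The main obstacle I anticipate is not conceptual but bookkeeping: one must verify carefully that for types $H$ and $K$ the relations cutting out $T_H$, $T_K$ inside $T_W$ correspond \emph{exactly} (not just up to implication in one direction) to the relations defining the domains $\phi_H(\ZN^m)$, $\phi_K(\ZN^m)$ of $\theta$, and that the intersections with $L$ and the appearance of negative exponents $a-\veps_k$ do not introduce extra monomials or spoil the well-definedness of $\varphi$/$\theta$ on the support. Since all of this is routine given Lemmas \ref{lt}, \ref{d8} and \ref{d9}, and indeed Lemma \ref{le} is stated as a summary ("We can summarize the above discussion as follows"), the proof is essentially an organized restatement of the preceding paragraphs and may reasonably be left to the reader, which is presumably why the statement carries a $\square$.
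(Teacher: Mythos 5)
Your proposal is correct and follows essentially the same route as the paper: Lemma \ref{le} is stated there without a separate proof precisely because it summarizes the preceding discussion (the displayed computation of $\chi*(\xa\partial_i)$ and the converse remark about quasi-tori in $T_W$, together with the constructions via $\phi_H$, $\phi_K$ and Lemma \ref{lt}), and your argument is exactly that unwinding, including the correct observation that the relations $t_it_{i\prim}=t_jt_{j\prim}$ (resp.\ $=t_m$) match the defining relations of $\phi_H(\ZN^m)$ (resp.\ $\phi_K(\ZN^m)$) so that $\varphi$ or $\theta$ can be recovered by duality.
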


The goal of Section 4 is to show that if $G$ has no elements of order $p$ then $\eta(\widehat{G})$ is always containted in a maximal torus. Hence, a conjugate of $\eta(\widehat{G})$ will be contained in $T_X$, which will mean, in view of Lemma \ref{le}, that the $G$-grading is isomorphic to a standard $G$-grading.

\section{The automorphism groups of simple graded Cartan type\\ Lie algebras}\label{sA}

The automorphism group of each simple graded Cartan type Lie algebra is a subgroup of the automorphism group of the Witt algebra, which in turn is isomorphic to a subgroup of the automorphism group of $O(m;\underline{n})$. We will describe the automorphism groups of the Cartan type Lie algebras as subgroups of the automorphism groups of $O(m;\underline{n})$.

We start by introducing so-called {\em continuous} automorphisms of $\Omn$.
\begin{dfn}\label{d11}
Let $\Amn$ be the set of all $m$-tuples $(y_1,\dots,y_m)\in\Omn^m$ for which $\det(\partial_i(y_j))_{1\leq i,j\leq m}$ is invertible in $\Omn$ and also
$$y_i=\di\sum_{0<a\leq\taun}\alpha_i(a)\xa\quad\mbox{with }\alpha_i(p^l\epsilon_j)=0\mbox{ if }n_i+l>n_j.$$
\end{dfn}
The group of continuous automorphisms of $\Omn$, $\Aut_c\Omn$, is defined as follows \cite[Theorem 6.32]{slafpc}. For any $(y_1,\dots,y_m)\in\Amn$ we define a map $\Omn\to\Omn$ by setting
$$\varphi\left(\di\sum_{0\leq a \leq\taun}\alpha(a)x^{(a)}\right)=\di\sum_{0\leq a\leq\taun}\alpha(a)\di\prod_{i=1}^my_i^{(a_i)},$$
where $y^{(q)}$, $q\in\mathbb{N}$, denotes the $q^{\mathrm{th}}$ divided power of $y$ \cite[Chapter 2]{slafpc}. 

We have a map $\Phi$ from $\Aut_c\Omn$ to $\Aut \Wn$ defined by $$\Phi(\psi)\left(\di\sum_{1\leq i\leq m}f_i\partial_i\right)=\psi\circ\left(\di\sum_{1\leq i\leq m}f_i\partial_i\right)\circ\psi^{-1},$$ where the elements of $\Wn$ are viewed as  derivations of $\Omn$.

\begin{thm}\label{t2}\emph{\cite[Theorem 7.3.2]{slafpc}}
The map $\Phi:\Aut_c\Omn\to\Wn$ is an isomorphism of groups provided that $(m;\underline{n})\neq (1,1)$ if $p=3$. Also, except for the case of $H(m;n)^{(2)}$ with $m=2$ and $\min(n_1,n_2)=1$ if $p=3$,
$$\begin{array}{lllll}\Aut \Sn&=&\Phi(\{\psi\in\Aut_c\Omn\;|\;\psi(\ws)\in F^\times\ws\}),\\
\Aut \Hn&=&\Phi(\{\psi\in\Aut_c\Omn\;|\;\psi(\wh)\in F^\times\wh\}),\\
\Aut \Kn&=&\Phi(\{\psi\in\Aut_c\Omn\;|\;\psi(\wk)\in \Omn^\times\wk\})\end{array}$$
$\hfill\square$
\end{thm}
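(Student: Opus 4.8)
This is \cite[Theorem~7.3.2]{slafpc}, so the paper's proof is the citation; here is how I would reconstruct it. \emph{Injectivity of $\Phi$.} If $\Phi(\psi)=\Id$ then $\psi$ commutes with every $\partial_i\in\Wn$ as an operator on $\Omn$, so from $\psi(1)=1$ we get $\partial_j(\psi(x_i))=\psi(\partial_jx_i)=\delta_{ij}$, hence $\partial_j(\psi(x_i)-x_i)=0$ for all $j$. Since $\bigcap_j\ker\partial_j=F\cdot 1$ in $\Omn$ and both $\psi(x_i)$ and $x_i$ have zero constant term (as members of $\Amn$), we get $\psi(x_i)=x_i$, so $\psi=\Id$.

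\emph{Surjectivity of $\Phi$ onto $\Aut\Wn$.} Let $\phi\in\Aut\Wn$. The key point is that $\phi$ preserves the standard (Weisfeiler) filtration $\Wn=\Wn_{(-1)}\supset\Wn_{(0)}\supset\Wn_{(1)}\supset\cdots$, with $\Wn_{(k)}=\Span\{\xa\partial_i:a_1+\cdots+a_m\geq k+1\}$: one recovers $\Wn_{(0)}$ intrinsically as a maximal subalgebra of the appropriate type via the recognition theory of filtered Cartan type Lie algebras, and this is exactly where $W(1;\un)$ with $p=3$ (when $\Wn\cong\mathfrak{sl}_2$, which has extra maximal subalgebras of the relevant kind) must be excluded. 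Granting filtration-invariance, $\phi$ acts on $\Wn_{(-1)}$ compatibly with the action of $\Wn_{(0)}/\Wn_{(1)}\subseteq\mathfrak{gl}(\Wn_{(-1)})$; composing $\phi$ with a suitable (necessarily divided-power-level-respecting) linear substitution from $\Phi(\Aut_c\Omn)$ we reduce to $\phi\equiv\Id$ modulo $\Wn_{(1)}$, and then an induction on filtration degree produces a substitution $x_i\mapsto y_i$ with $\phi=\Phi(\psi)$, $\psi(x_i)=y_i$. One checks $(y_1,\dots,y_m)\in\Amn$: invertibility of $(\partial_i(y_j))$ expresses that $\Phi(\psi)$ maps $\Wn_{(-1)}$ onto $\Wn_{(-1)}$, and the conditions $\alpha_i(p^l\epsilon_j)=0$ for $n_i+l>n_j$ encode that $\psi$ and $\psi^{-1}$ respect the divided-power structure --- equivalently that $\psi\,\partial_i\,\psi^{-1}$ has bounded divided-power order --- which must hold because $\phi$ maps $\Wn$ into $\Wn$, not merely into $\Der\Omn$.

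\emph{The subalgebras $\Sn$, $\Hn$, $\Kn$.} Each $L=\Sn$, $\Hn$, $\Kn$ is a graded subalgebra of $\Wn$ with $L_{(-1)}=\Wn_{(-1)}$ and $L_{(0)}$ acting (almost) irreducibly on $\Wn_{(-1)}$; transitivity of this filtration implies that every automorphism of $L$ extends uniquely to $\Wn$, and this extension step is exactly where $H(2;(1,n_2))^{(2)}$ with $p=3$ (the case $H(2;\un)^{(2)}\cong\mathfrak{psl}_3$, which has a graph automorphism) fails and must be excluded. By the previous part the extension is $\Phi(\psi)$ for a unique $\psi\in\Aut_c\Omn$, so it remains to decide which $\psi$ make $\Phi(\psi)$ preserve $L$. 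Using $\psi(dx_1\wedge\cdots\wedge dx_m)=\det(\partial_i\psi(x_j))\,\ws$ together with the identities $\mathcal{L}_{\Phi(\psi)D}=\psi\circ\mathcal{L}_D\circ\psi^{-1}$ on forms and $\mathcal{L}_D\ws=\operatorname{div}(D)\,\ws$, a short computation gives that $\Phi(\psi)$ preserves $\Sn$ iff $D(j_\psi)=0$ for all $D\in\Sn$, where $j_\psi=\det(\partial_i\psi(x_j))$; since $\Sn$ has no nonconstant invariants on $\Omn$ (for $m\geq3$), this holds iff $j_\psi\in F^\times$, i.e.\ iff $\psi(\ws)\in F^\times\ws$ (which then forces $\Phi(\psi)$ to preserve $S(m;\unn)$ as well). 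The same reasoning, with $\mathcal{L}_D\wh$ measuring the failure of $D$ to be Hamiltonian and $\mathcal{L}_D\wk$ being a multiple of $\wk$ measuring the failure of $D$ to be contact, yields the characterizations of $\Aut\Hn$ and $\Aut\Kn$ via $\psi(\wh)\in F^\times\wh$ and $\psi(\wk)\in\Omn^\times\wk$.

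\emph{Main obstacle.} Everything rests on the surjectivity of $\Phi$, and within it on the filtration-invariance of automorphisms: this is a recognition-type statement whose proof is delicate and is precisely where the low-rank, low-characteristic degeneracies $\mathfrak{sl}_2$ and $\mathfrak{psl}_3$ intervene and force the two exclusions in the statement. The passage from ``preserves the derived subalgebra'' to ``preserves the relevant form up to scalar'' is, by contrast, a routine Lie-derivative and Jacobian computation once the extension-and-lifting step is in hand.
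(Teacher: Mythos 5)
The paper gives no proof of this statement: it is quoted directly from Strade \cite[Theorem 7.3.2]{slafpc}, so the citation is the proof, exactly as you note at the outset. Your reconstruction (injectivity of $\Phi$ from commutation with the $\partial_i$, surjectivity via invariance of the standard filtration and the recognition machinery with the $p=3$ exclusions for $W(1;\un)\cong\mathfrak{sl}_2$ and the Hamiltonian exception correctly located there, and the identification of $\Aut\Sn$, $\Aut\Hn$, $\Aut\Kn$ through the action of $\psi$ on $\ws$, $\wh$, $\wk$) follows essentially the same route as the cited source, so there is nothing further to compare.
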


\noindent
\begin{rmr}The map of the tangent Lie algebras corresponding to $\Phi$ is a restriction of $\ad:W\to\Der W$, and hence injective. It follows that $\Phi$ is an isomorphism of {\em algebraic} groups.\end{rmr}

We will use the following notation 
$$\begin{array}{lllll} \Sf&=&\{\psi\in\Aut_c\Omn\;|\;\psi(\ws)\in F^\times\ws\},\\
\Hf&=&\{\psi\in\Aut_c\Omn\;|\;\psi(\wh)\in F^\times\wh\},\\
\Kf&=&\{\psi\in\Aut_c\Omn\;|\;\psi(\wk)\in \Omn^\times\wk\}.\end{array}$$
The above groups are subgroups of $\Wf:=\Aut_c\Omn$. We will refer to them collectively by $\Xf$ where $\mathcal{X}=\mathcal{W},$ $\mathcal{S},$ $\mathcal{H}$ or $\mathcal{K}$. 

\section{Gradings by groups without $p$-torsion}\label{sC}

As mentioned in Section 2, to prove our Theorem \ref{tM}, we need to show that $\eta(\widehat{G})$ is conjugate in $\Aut\Xn$ to a subgroup of the maximal torus $T_X$ where $X=W$, $S$, $H$, $K$. We are going to use an important general result following from \cite[Corollary 3.28]{plat}.

\begin{prp}\label{t10}
A quasi-torus of an algebraic group is contained in the normalizer of a maximal torus. $\hfill\square $
\end{prp}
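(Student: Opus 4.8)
The plan is to deduce the statement from Platonov's structure theory of periodic linear groups by reducing, via an induction on $\dim\mathbf{G}$, to the case where the quasi-torus is finite. So let $Q$ be a quasi-torus in a linear algebraic group $\mathbf{G}$.

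First I would dispose of the connected part. The identity component $Q^{\circ}$ is a connected abelian subgroup all of whose elements are semisimple, hence a torus; if $Q=Q^{\circ}$ there is nothing to prove, since a maximal torus of $\mathbf{G}$ containing $Q^{\circ}$ normalizes itself. Otherwise I would replace $\mathbf{G}$ by $H:=C_{\mathbf{G}}(Q^{\circ})$, which contains $Q$ because $Q$ is abelian, and in which $Q^{\circ}$ is a central torus. If $H\subsetneq\mathbf{G}$, induction applied inside $H$ yields a maximal torus $T$ of $H$ normalized by $Q$; since $Q^{\circ}$ is central in $H$ it lies in $T$, and a maximal torus of the centralizer of a torus is a maximal torus of $\mathbf{G}$, so $Q\subset N_{\mathbf{G}}(T)$ and we are done. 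If instead $Q^{\circ}$ is central in $\mathbf{G}$ and nontrivial, I would pass to the quotient $\overline{\mathbf{G}}=\mathbf{G}/Q^{\circ}$: the image $\overline{Q}$ is again a quasi-torus, because quotient homomorphisms respect the Jordan decomposition, so that semisimplicity and commutativity persist, and $\dim\overline{\mathbf{G}}<\dim\mathbf{G}$. By induction $\overline{Q}$ normalizes a maximal torus $\overline{T}$ of $\overline{\mathbf{G}}$; its full preimage $T\subset\mathbf{G}$ is connected, has trivial unipotent part (any unipotent element would survive in the torus $\overline{T}$), hence is a torus, is maximal because preimages of maximal tori under quotients by central tori are maximal, and is normalized by $Q$ since $\overline{Q}$ normalizes $\overline{T}$.

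This leaves the case $Q^{\circ}=1$, that is, $Q$ a finite abelian subgroup of $\mathbf{G}$ consisting of semisimple elements, and this is the step I expect to be the real obstacle: it is precisely the content of \cite[Corollary 3.28]{plat}. The difficulty is that in characteristic $p$ the set of semisimple elements of $\mathbf{G}$ is not Zariski closed, so one cannot simply take the Zariski closure $\overline{Q}$ and conclude that it is a quasi-torus — which would reduce this case to the connected one already handled. Showing that $\overline{Q}$ is in fact a quasi-torus, equivalently that $Q$ normalizes some maximal torus, is exactly the nontrivial input furnished by Platonov's analysis of periodic subgroups of linear algebraic groups. Granting that result, the induction closes and the proposition follows.
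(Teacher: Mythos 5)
The paper offers no argument for this proposition at all: it is stated as a direct quotation of \cite[Corollary 3.28]{plat}, with the box closing the statement. Your proposal ultimately rests on exactly the same citation, since the finite case --- which you correctly identify as the real content, because semisimplicity is not a closed condition in characteristic $p$ --- is deferred to Platonov. So in substance you are doing what the paper does, but you add a dimension-induction reduction from an arbitrary quasi-torus $Q$ to the case $Q^{\circ}=1$. That reduction is essentially sound (the identity component of a quasi-torus is a torus, images under quotient maps remain quasi-tori, preimages of maximal tori under quotients by central tori are maximal tori, and maximal tori of $C_{\mathbf{G}}(S)$ for a torus $S$ are maximal in $\mathbf{G}$), and it is a reasonable supplement if Platonov's corollary is read as a statement about periodic subgroups only.

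There is, however, one genuine slip in your case analysis. Your dichotomy is ``$H:=C_{\mathbf{G}}(Q^{\circ})\subsetneq\mathbf{G}$, apply induction inside $H$'' versus ``$Q^{\circ}$ central in $\mathbf{G}$, pass to $\mathbf{G}/Q^{\circ}$''. These cases are not exhaustive in the intended way: if $Q^{\circ}$ is central in $\mathbf{G}^{\circ}$ but not in $\mathbf{G}$ (which can happen when $\mathbf{G}$ is disconnected, as $\Aut L$ may be), then $H$ is a proper subgroup with $\dim H=\dim\mathbf{G}$, so induction on dimension does not apply to $H$, while $Q^{\circ}$ is not normal in $\mathbf{G}$ and the quotient step is unavailable. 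The fix is easy and worth recording: whenever $Q^{\circ}\neq 1$, pass to $H=C_{\mathbf{G}}(Q^{\circ})$ (which contains $Q$, has $Q^{\circ}$ central, and whose maximal tori are maximal tori of $\mathbf{G}$, since any maximal torus of $H$ contains the central torus $Q^{\circ}$ and any torus of $\mathbf{G}$ containing it centralizes $Q^{\circ}$), and then perform your central-quotient argument inside $H$: the group $H/Q^{\circ}$ has dimension strictly less than $\dim\mathbf{G}$ because $\dim Q^{\circ}\geq 1$, so the inductive hypothesis applies there, and pulling the maximal torus back to $H$ gives a maximal torus of $\mathbf{G}$ normalized by $Q$. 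With that repair your induction closes, and the only unproved input is the finite abelian case, i.e.\ precisely \cite[Corollary 3.28]{plat}, which is what the paper itself invokes.
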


This brings us to the necessity of looking at normalizers of maximal tori in $\Aut\Xn$. Using the isomorphism $\Phi$ described in Section 3, we are going to work inside the groups $\Wf=\Aut_c\Omn$.

\subsection{Normalizers of maximal tori}\label{ssN}

We denote by $\Aut_0\Omn$ the subgroup of $\Aut_c\Omn$ consisting of all $\psi$ such that $\psi(x_i)=\di\sum_{j=1}^m\alpha_{i,j}x_{j}$, $\alpha_{i,j}\in F$, $1\leq j\leq m$. The group $\Aut_0\Omn$ is canonically isomorphic to a subgroup of $\GL(m)$, which we denote by $\Gmn$. If $n_i=n_j$ for $1\leq i,j\leq m$ then $\Gmn=\GL(m)$, otherwise $\Gmn\neq \GL(m)$. The condition for a tuple $(y_1,
\dots,y_n)$ to be in $\Amn$, \begin{equation}\label{flag}y_i=\di\sum_{0<a}\alpha_i(a)\xa\quad\mbox{with }\alpha_i(p^l\epsilon_j)=0\mbox{ if }n_i+l>n_j,\end{equation}
imposes a {\em flag} structure on the vector space $V=\Span\{x_1,\dots,x_m\}$.

\begin{dfn}\label{d21}
Given $\underline{n}=(n_1,\dots,n_m)$, with $m>0$, we set  $\Xi_0=\emptyset$ and then, inductively,
$$\Xi_i=\Xi_{i-1}\cup\{j\;|\;n_j=\di\max_{k\notin \Xi_{i-1}}\{n_k\}\}.$$
Set $V_i=\Span\{x_j\;|\;j\in \Xi_i\}\mbox{ for }i\geq 0$. Then $0=V_0\subset V_1\subset V_2\subset\cdots\subset V$ is a \emph{flag} in $V$ (i.e., an ascending chain of subspaces). We denote this flag by $\Fmn$ and say that an automorphism $\psi$ of $\Omn$ {\em respects} $\Fmn$ if $\psi(V_i)=V_i$, for all $i$.
\end{dfn}

\noindent Condition (\ref{flag}) implies that $\Gmn$ consists of all elements of $\GL(m)$ that respect $\Fmn$.

According to \cite[Section 7.3]{slafpc}, $\Aut_0\Omn\cap\Sf=\Aut_0\Omn$, i.e., in the case of special algebras we have to deal with the same subgroup of $\GL(m)$ as in the case of Witt algebras.

In the Hamiltonian case, $V = \Span \{x_1,\ldots,x_r\} \oplus \Span \{ x_{1'}, \ldots , x_{r'}\}$, and $\wh$ induces a nondegenerate skew-symmetric form on $V$, given by $\langle x_i,x_j\rangle=\sigma(i)\delta_{i,j\prim}$, for all $i,j=1,\ldots,2r$. The image of $\Aut_0\Omn\cap\Hf$ in $\GL(m)$, $m=2r$, is the product of the subgroup of scalar matrices and the subgroup $\Spmn:=\Sp(m)\cap\Gmn$. This product is almost direct: the intersection is $\{\pm\Id\}$.

The maximal tori $T_X$ in $\Aut\Xn$ described in Lemma \ref{lt} correspond, under the algebraic group isomorphism $\Phi$, to the following maximal tori in $\Xf$:
$$\begin{array}{lcl}\Tf_W&=&\Tf_S=\{\psi\in\Wf\;|\;\psi(x_i)=t_ix_i,\ t_i\in F^\times\},\\\\
\Tf_H&=&\{\psi\in\Wf\;|\;\psi(x_i)=t_ix_i,\ t_i\in F^\times,\, t_it_{i\prim}=t_jt_{j\prim},\},\\\\
\Tf_K&=&\{\psi\in\Wf\;|\;\psi(x_i)=t_ix_i,\ t_i\in F^\times,\, t_it_{i\prim}=t_jt_{j\prim}=t_{m},\ 1\leq i,j\leq r\}.\end{array}$$

A convenient way to view the elements of the above tori is to view them as $m$-tuples of nonzero scalars. Define $\lambda:(F^\times)^m\to \Aut_0\Omn$ where $\lambda(\underline{t})(x_i)=t_ix_i$ for $1\leq i\leq m$. Then $\lambda((F^\times)^m)=\Tf_W$.

\begin{dfn}\label{d30}
We will say that $\underline{t}\in(F^\times)^m$ is {\em $X$-admissible} if $\lambda(\underline{t})\in \Tf_X$, where $X=W,$ $S$, $H$ or $K$.
\end{dfn}

An important subgroup which we use for the description of the normalizer $N_{\Xf}(\Tf_X)$ for $X=W,S,H$ or $K$, is the subgroup $\M$ of $\Wf$.

\begin{dfn}\label{d31}
Let $\M$ be the subgroup of $\Wf$ that consists of $\psi$ such that, for each $1\leq i\leq m$, we have $\psi(x_i)=\alpha_ix_{j_i}$ where $\alpha_i\in F^\times$ and $1\leq j_i\leq m$.\end{dfn}

Thus $\M\subset\Aut_0\Omn$ is isomorphic to the group of monomial matrices that respect the flag.

\begin{lmm}\label{l22}
The subgroups $N_{\Wf}(\Tf_W)$, $N_{\Sf}(\Tf_S)$ and $N_{\Hf}(\Tf_H)$ are contained in $\M$.
\end{lmm}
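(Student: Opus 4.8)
The plan is to analyze an element $\psi\in N_{\Wf}(\Tf_W)$ (and then specialize to the $S$ and $H$ cases) by tracking how conjugation by $\psi$ acts on the torus $\Tf_W=\lambda((F^\times)^m)$, and to deduce that $\psi$ must permute the coordinate lines $Fx_i$ up to scalars, i.e. $\psi\in\M$. First I would observe that conjugation gives an automorphism $c_\psi$ of the torus $\Tf_W\cong(F^\times)^m$, hence a permutation-type action on its character lattice $\ZN^m$; since $\Tf_W$ is generated by the one-parameter subgroups $\lambda(\ut)$, conjugation sends $\lambda(\ut)$ to some $\lambda(\sigma\cdot\ut)$ where $\sigma$ acts linearly and invertibly on the exponent tuples. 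Concretely, for each simultaneous eigenvector $x^{(a)}\partial_k$ of $\Tf_W$ (with character $a-\epsilon_k$), the element $\psi^{-1}(x^{(a)}\partial_k)$ must again be a simultaneous eigenvector of $\Tf_W$, hence again a scalar multiple of some $x^{(b)}\partial_l$, and the assignment $(a-\epsilon_k)\mapsto(b-\epsilon_l)$ is the fixed linear map $\sigma\in\GL_m(\ZN)$ determined by $c_\psi$.

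Next I would pin down $\sigma$. Applying this to the weights $-\epsilon_k$ (the elements $\partial_k\in\Wn_{-\epsilon_k}$) and to the weights $\epsilon_l-\epsilon_k$ (the elements $x_l\partial_k$, which lie in degree-$0$), and using that $\psi$ is a continuous automorphism — so $\psi^{-1}$ preserves the filtration, sends $\Wn_{-1}=\Span\{\partial_k\}$ to itself, and sends the degree-$0$ part $\mathfrak{gl}(V)$ to itself — I would show $\sigma$ must permute the set $\{-\epsilon_1,\dots,-\epsilon_m\}$, i.e. $\sigma$ is (the transpose-inverse of) a permutation matrix. Then conjugation by $\psi$ permutes the $\partial_k$'s up to scalars, equivalently $\psi^{-1}$ maps each $x_k$ to a scalar multiple of some $x_{j_k}$: indeed $\psi^{-1}$ acts on $V=\Span\{x_1,\dots,x_m\}$ by the degree-$0$ analysis, and the induced linear map on $V$ is dual to the permutation action on the $\partial_k$, forcing it to be a monomial matrix. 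Since $\psi^{-1}$ respects the flag $\Fmn$ (being an element of $\Wf$), this monomial matrix respects the flag, so $\psi\in\M$ by Definition~\ref{d31}. For $S$: $\Sf\subset\Wf$ and $\Tf_S=\Tf_W$, so $N_{\Sf}(\Tf_S)\subset N_{\Wf}(\Tf_W)\subset\M$ automatically. For $H$: $N_{\Hf}(\Tf_H)$ normalizes $\Tf_H$, whose subtorus structure still contains enough one-parameter subgroups to force the same weight-permutation argument (the weights of the $\partial_k$ are still distinguished inside $\Tf_H$ since $\Tf_H$ has codimension $r-1$ in $\Tf_W$ but still separates the lines $Fx_i$); again $\psi$ preserves the degree filtration and the flag, so $\psi\in\M$.

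The main obstacle I anticipate is the Hamiltonian case: there $\Tf_H$ is a proper subtorus of $\Tf_W$, so it is not a priori clear that its characters still separate the coordinate lines $Fx_i$, and one must rule out the possibility that conjugation by $\psi$ "mixes" $x_i$ with $x_{i'}$ in a non-monomial way while still normalizing the smaller torus $\Tf_H$. I would handle this by computing the weights of $x_1,\dots,x_m$ as characters of $\Tf_H$ explicitly — using the relations $t_it_{i'}=t_jt_{j'}$ — and checking that these $m$ characters of $\Tf_H$ are still pairwise distinct (this is where $p>2$ and the structure of $\unn$ enter), so that $\psi^{-1}$ permuting the weight spaces of $\Tf_H$ on $V$ forces it to permute the lines $Fx_i$; after that the flag-respecting property and the fact that $\psi\in\Hf$ preserves $\wh$ up to scalar finish the argument exactly as before. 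A secondary technical point is justifying that $\psi^{-1}$ genuinely preserves the canonical degree filtration on $\Wn$ and restricts to a linear map of $V$ — this follows from the description of $\Aut_c\Omn$ in Definition~\ref{d11} together with Theorem~\ref{t2}, since every continuous automorphism sends $x_i$ to an element of $\Omn$ with zero constant term and invertible linear part, hence preserves the maximal ideal and its powers.
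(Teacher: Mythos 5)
Your argument pivots on the assertion that, because $\psi$ is a continuous automorphism, $\psi^{-1}$ ``sends $\Wn_{-1}=\Span\{\partial_k\}$ to itself and the degree-zero part to itself'' and that $\psi$ ``restricts to a linear map of $V$.'' This is false for general elements of $\Wf=\Aut_c\Omn$: continuity gives invariance of the maximal ideal of $\Omn$ and its powers, hence of the \emph{filtrations}, but not of the graded components. For example, $\psi(x_1)=x_1+x_1^{(2)}$, $\psi(x_i)=x_i$ for $i\geq 2$, lies in $\Wf$ (check Definition~\ref{d11}), does not map $V$ into $V$, and the induced automorphism of $\Wn$ sends $\partial_1$ to $(1+x_1)^{-1}\partial_1\notin\Span\{\partial_k\}$. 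For elements of the normalizer the statements you want are true, but proving them is essentially the content of the lemma, so as written the step is unjustified (your closing ``secondary technical point'' again only invokes continuity, which does not yield it). It could be repaired --- e.g.\ $\Phi(\psi)(\partial_k)$ is a $\Tf_W$-weight vector, and the only weight lines of $\Wn$ not contained in the invariant filtration subalgebra of nonnegative degrees are the $F\partial_l$ --- but you would have to make that argument explicitly; note also that your claim that the image of every weight vector $x^{(a)}\partial_k$ is a scalar multiple of a single monomial $x^{(b)}\partial_l$ fails, since $\Tf_W$-weight spaces of $\Wn$ with nonnegative weight are $m$-dimensional.

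The gap is most serious in the Hamiltonian case. You plan to check only that the characters of $\Tf_H$ on the lines $Fx_1,\dots,Fx_m$ are pairwise distinct and then conclude that $\psi$ permutes these lines; but that presupposes $\psi$ preserves $V$, which is exactly what is not established. Since $\Tf_H$ is a proper subtorus of $\Tf_W$, the statement actually needed is stronger: the $\Tf_H$-eigenspace of the \emph{whole algebra} $\Omn$ containing $x_k$ must be exactly $Fx_k$, i.e.\ one must rule out higher-degree monomials $x^{(b)}$ with $\ut^{b}=t_k$ for every $H$-admissible $\ut$ (a priori $\psi(x_k)$ could be $x_k$ plus such terms). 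The paper's proof does precisely this, working directly with the action of $\Tf_X$ on $\Omn$: for $\psi$ in the normalizer, $\psi(x_i)$ is a common eigenvector of $\Tf_X$, its linear part is nonzero by invertibility of the Jacobian in Definition~\ref{d11}, and the eigenspace in question is one-dimensional --- trivially for $W,S$, and for $H$ because $b\geq 0$ together with $\ut^{b}=t_k$ for all $H$-admissible $\ut$ forces $b=\veps_k$. That direct argument on $\Omn$ avoids the lattice automorphism $\sigma$, the filtration of $\Wn$, and the duality step entirely; note also that $p>2$ plays no role in this particular lemma.
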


\begin{proof}

We will show that $N_{\Wf}(\Tf_X)\subset\M$ for $X=W,$ $S,$ $H$. Since $\Xf\subset\Wf$ we have $N_{\Xf}(\Tf_X)\subset N_{\Wf}(\Tf_X)$.

Let $\psi\in N_{\Wf}(\Tf_X)$. For any $1\leq i\leq m$ the element $x_i$ is a common eigenvector of $\Tf_X$ so $\psi(x_i)$ is also a common eigenvector of $\Tf_X$. Also, since $\psi\in\Aut_c \Omn$, $\psi(x_i)=\di\sum_{0<a\leq\tau(\underline{n})}\alpha_i(a)x^{(a)}$ where, among other conditions,
$\alpha_i(\epsilon_{j_i})\neq 0$ for some $1\leq j_i\leq{m}$.

First we consider the case $X=W$ and $X=S$ (recall that $\Tf_W=\Tf_S$).

It is easy to see that the eigenspace decomposition of $\Omn$ with respect to $\Tf_W$ is the canonical $\ZN^m$-grading on $\Omn$. The homogeneous space $O_a=\Span\{\xa\}$ is the eigenspace with eigenvalue $\underline{t}^{a}:=t_1^{a_1}\cdots t_m^{a_m}$ with respect to $\lambda(\underline{t})\in \Tf_W$. It follows that $\psi(x_i)\in O_a$ for some $0\leq a\leq\taun$ since $\psi(x_i)$ is an eigenvector of $\Tf_X$. The condition that $\alpha_i(\epsilon_{j_i})\neq 0$ for some $1\leq j_i\leq m$ forces $a=\veps_{j_i}$. Hence $\psi\in\M$.

We continue with the case of $X=H$.

The torus $\Tf_H$ is contained in $\Tf_W$. In order for $\lambda(\underline{t})\in \Tf_W$ to belong to $\Tf_H$, the $m$-tuple $\ut$ must be $H$-admissible, i.e., satisfy $t_it_{i\prim}=t_jt_{j\prim}$ for $1\leq i,j\leq r$ where $m=2r$. The eigenspace decomposition of $\Omn$ with respect to $\Tf_H$ is a coarsening of the canonical $\ZN^m$-grading. The eigenspace with eigenvalue $\underline{t}^{a}$ with respect to $\lambda(\underline{t})\in\Tf_H$ is $Q_a:=\bigoplus O_b$ where the direct sum is over the set of all $b$ such that $\underline{t}^{a}=\underline{t}^{b}$ for all $H$-admissible $\ut$. If $O_b\neq0$ and $t_k=\underline{t}^{b}$ for all $H$-admissible $\underline{t}$, then $b=\veps_k$ since all the entries in the $m$-tuple $b$ are non-negative. This implies that $Q_{\veps_k}=\Span\{x_k\}$. Now $\psi(x_i)\in Q_a$ for some $0\leq a\leq\taun$ since $\psi(x_i)$ is an eigenvector of $\Tf_H$. The condition that $\alpha_i(\epsilon_{j_i})\neq 0$ for some $1\leq j_i\leq 2r$ again forces $a=\veps_{j_i}$. Hence $\psi\in\M$. \end{proof}
For the case of contact algebras, similar arguments do not give us that\\ $N_{\Wf}(\Tf_K)$ is in $\M$.

\begin{lmm}\label{l23}
If $\psi\in N_{\Kf}(\Tf_K)$, $m=2r+1$ then  \begin{equation}\label{e2}\psi(x_m)=\alpha_m(\veps_m)x_m+\di\sum_{l=1}^r\alpha_m(\veps_l+\veps_{l\prim})x_lx_{l\prim}\end{equation}and, for $1\leq i\leq 2r$, we have $\psi(x_i)=\alpha_i(\veps_{j_i})x_{j_i}$ where $1\leq j_i\leq 2r$.
\end{lmm}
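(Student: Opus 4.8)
\textbf{Proof proposal for Lemma \ref{l23}.}

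The plan is to mimic the eigenvector argument used in Lemma \ref{l22}, but to account for the extra relation $t_it_{i\prim}=t_m$ that defines $\Tf_K$ inside $\Tf_W$. First I would compute the eigenspace decomposition of $\Omn$ with respect to $\Tf_K$. As in the Hamiltonian case, this is a coarsening of the canonical $\ZN^m$-grading: the eigenspace with eigenvalue $\underline{t}^{a}$ with respect to $\lambda(\underline{t})\in\Tf_K$ is $R_a:=\bigoplus_b O_b$, the sum taken over all $b$ with $\underline{t}^{a}=\underline{t}^{b}$ for every $K$-admissible $\ut$. The key combinatorial step is to identify, for each $k$, which monomials $x^{(b)}$ lie in the eigenspace $R_{\veps_k}$. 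A $K$-admissible tuple is parametrized by $t_1,\dots,t_{2r}$ with $t_m:=t_it_{i\prim}$ (these being forced equal for all $i$, and this common value free once we also note one degree of freedom is consumed), so the monoid of eigenvalues is generated by $t_1,\dots,t_{2r}$ subject to $t_it_{i\prim}=t_jt_{j\prim}$. For $1\leq k\leq 2r$, the only $b\geq 0$ with $\underline{t}^{b}=t_k$ for all such $\ut$ is $b=\veps_k$, so $R_{\veps_k}=\Span\{x_k\}$; but for $k=m$ the eigenvalue $t_m=t_lt_{l\prim}$ is also realized by $b=\veps_l+\veps_{l\prim}$ for every $1\leq l\leq r$, so $R_{\veps_m}=\Span\{x_m\}\oplus\bigoplus_{l=1}^r\Span\{x_lx_{l\prim}\}$.

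Next, let $\psi\in N_{\Kf}(\Tf_K)$. Since $\psi$ normalizes $\Tf_K$, it permutes the common eigenspaces of $\Tf_K$ on $\Omn$, and since $\psi$ is graded-degree-preserving in the canonical $\ZN$-grading (being induced by a continuous automorphism, $\psi(x_i)$ has nonzero linear part), $\psi(x_i)$ must itself be a $\Tf_K$-eigenvector whose support contains a degree-one monomial. Writing $\psi(x_i)=\sum_{0<a\leq\taun}\alpha_i(a)x^{(a)}$ with $\alpha_i(\veps_{j_i})\neq 0$ for some $j_i$, the eigenvector condition forces $\psi(x_i)\in R_{\veps_{j_i}}$. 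By the previous paragraph, if $j_i\leq 2r$ then $R_{\veps_{j_i}}=\Span\{x_{j_i}\}$ and we conclude $\psi(x_i)=\alpha_i(\veps_{j_i})x_{j_i}$; if $j_i=m$ then $\psi(x_i)$ has the form displayed in \eqref{e2}. It remains to show that exactly one index $i$ — necessarily $i=m$ — lands in the case $j_i=m$, while the other $2r$ indices land in the injective case with distinct targets. This follows because $\psi$ is invertible: the images $\psi(x_1),\dots,\psi(x_m)$ must be linearly independent modulo $\Omn_{(2)}$ (the span of monomials of degree $\geq 2$), i.e., their degree-one parts must be linearly independent; the $2r$ indices with $j_i\leq 2r$ then must hit $2r$ distinct basis vectors among $x_1,\dots,x_{2r}$, and this forces the remaining index to have degree-one part a multiple of $x_m$ — equivalently $j_i=m$ for exactly one $i$. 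To see that this index is $i=m$: the grading element of $\KKn$ singles out $x_m$ by the shifted degree $\deg(x^{(a)}\partial_m)=\cdots+2a_m-1-\delta_{i,m}$, and $\psi\in\Kf$ satisfies $\psi(\wk)\in\Omn^\times\wk$, so $\psi$ respects the distinguished role of $x_m$; concretely, since $\psi(x_i)=\alpha_i x_{j_i}$ for $i\neq m$ are genuine permutation-type images fixing the span $\Span\{x_1,\dots,x_{2r}\}$ up to the flag, and $x_m\notin\Span\{x_1,\dots,x_{2r}\}$ while $x_lx_{l\prim}$ does involve $x_m$-free quadratic terms, the only consistent possibility is $j_m=m$.

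The main obstacle I anticipate is the last point — pinning down that the ``special'' index is $i=m$ rather than some other $i$, and that $\psi(x_i)$ for $i\leq 2r$ cannot acquire higher-order terms. The eigenvector argument alone only tells us that \emph{some} index behaves like \eqref{e2}; to nail down which one, I expect to need the compatibility condition $\psi(\wk)\in\Omn^\times\wk$, i.e., that $\psi\in\Kf$, together with the explicit form $\wk=dx_m+\sum\sigma(i)x_idx_{i\prim}$. Pulling $\wk$ back along the tentative $\psi$ and demanding the result be $(\mathrm{unit})\cdot\wk$ should both force the linear-only form of $\psi(x_i)$ for $i\leq 2r$ (any quadratic term in $\psi(x_i)$ would contribute a $dx$-term to $\psi(\wk)$ incompatible with the shape of $\wk$) and identify $x_m$ as the unique variable whose image can legitimately pick up the quadratic correction $\sum_l\alpha_m(\veps_l+\veps_{l\prim})x_lx_{l\prim}$. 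This differential-form bookkeeping is the computational heart of the lemma; everything else is a direct transcription of the Lemma \ref{l22} argument with the eigenspace $R_{\veps_m}$ enlarged.
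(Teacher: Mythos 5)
Your first two steps reproduce the paper's argument faithfully: the $\Tf_K$-eigenspace decomposition of $\Omn$, the identifications $R_{\veps_k}=\Span\{x_k\}$ for $1\leq k\leq 2r$ and $R_{\veps_m}=\Span\{x_m,\,x_lx_{l\prim}\;|\;1\leq l\leq r\}$, and the observation that each $\psi(x_i)$ is a common eigenvector with nonzero linear part, hence lies in some $R_{\veps_{j_i}}$, are exactly what the paper does; your linear-independence-of-linear-parts remark correctly shows that $i\mapsto j_i$ is a bijection, so exactly one index is ``special''. The genuine gap is the final step: your argument that the special index is $i=m$ is circular (it already assumes $\psi(x_i)=\alpha_ix_{j_i}$ for all $i\neq m$, which is precisely what must be shown), and you then defer the real work to an uncarried-out computation with $\psi(\wk)\in\Omn^\times\wk$. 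As written, nothing in your proposal rules out, say, $\psi(x_1)\in R_{\veps_m}$ and $\psi(x_m)=\alpha x_{j}$ with $j\leq 2r$.

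The missing idea is cheaper than the differential-form bookkeeping you anticipate, and it is what the paper's remark about dimensions is doing: since $\psi$ normalizes $\Tf_K$, conjugation twists characters, so $\psi$ maps each eigenspace $R_a$ \emph{onto} another eigenspace and therefore preserves eigenspace dimensions. For $k\leq 2r$, $\psi(R_{\veps_k})$ is a one-dimensional eigenspace; if $j_k=m$ it would be an eigenspace contained in, hence equal to, $R_{\veps_m}$, contradicting $\dim R_{\veps_m}=r+1>1$. Thus $j_k\leq 2r$ and $\psi(x_k)=\alpha_k(\veps_{j_k})x_{j_k}$ for all $k\leq 2r$, and then your bijectivity observation (or the same dimension count applied to $\psi(R_{\veps_m})$) forces $\psi(x_m)\in R_{\veps_m}$, which is \eqref{e2}. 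In particular the hypothesis $\psi\in\Kf$ is never needed: the paper proves the statement for all $\psi\in N_{\Wf}(\Tf_K)$, and the condition $\psi(\wk)\in\Omn^\times\wk$ is only exploited later, in Proposition \ref{p10}. Likewise your worry that $\psi(x_i)$, $i\leq 2r$, might pick up higher-order terms is already settled by your own eigenspace step once $j_i\leq 2r$. One small inaccuracy: continuous automorphisms do not preserve the canonical $\ZN$-grading (only the filtration); the fact you actually use, that each $\psi(x_i)$ has nonzero linear part, comes from the invertibility of $\det(\partial_i(y_j))$ in Definition \ref{d11}.
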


\begin{proof} 

We will prove that any $\psi\in N_{\Wf}(\Tf_K)$ has the form above. Since $x_i$ is a common eigenvector of $\Tf_K$ we have that $\psi(x_i)$ is also a common eigenvector of $\Tf_K$. Since $\psi\in\Aut_c\Omn$, we have $\psi(x_i)=\di\sum_{0<a\leq\tau(\underline{n})}\alpha_i(a)x^{(a)}$ where, among other conditions,
$\alpha_i(\epsilon_{j_i})\neq 0$ for some $1\leq j_i\leq{m}$.

The torus $\Tf_K$ is contained in $\Tf_W$. In order for a $\lambda(\underline{t})$ to be in $\Tf_K$, the $m$-tuple $\ut$ must be $K$-admissible, i.e., $t_it_{i\prim}=t_m$ for $1\leq i\leq r$. The eigenspace decomposition of $\Omn$ with respect to $\Tf_K$ is a coarsening of the canonical $\ZN^m$-grading. The eigenspace with eigenvalue $\underline{t}^{a}$ with respect to $\lambda(\underline{t})\in\Tf_K$ is \\
$R_a=\bigoplus O_b$ where the direct sum is over the set of $b$ such that $\underline{t}^{a}=\underline{t}^{b}$ for all $K$-admissible $\underline{t}$. If $O_b\neq0$ and $\underline{t}^{b}=t_k$ for all $K$-admissible $\underline{t}$,  $1\leq k\leq 2r$, then $b=\veps_k$ since all entries of the $m$-tuple $b$ are non-negative. This implies that $R_{\veps_k}=\Span\{x_k\}$ for $1\leq k\leq 2r$. If $O_b\neq0$ and $\underline{t}^{b}=t_m$ for all $K$-admissible $\underline{t}$ then either $b=\veps_m$ or $b=\veps_i+\veps_{i\prim}$ where $1\leq i\leq r$. This implies that $R_{\veps_m}=\Span\{x_m,x_ix_{i\prim}\ |\ 1\leq i\leq r\}$.

Now $\psi(x_i)\in R_a$ for some $0\leq a\leq\taun$. The condition that $\alpha_i(\epsilon_{j_i})\neq 0$ for some $1\leq j_i\leq{m}$ forces $a=\veps_{j_i}$. Note that the dimension of $R_{\veps_i}$ is 1 for $1\leq i\leq 2r$ and the dimension of $R_{\veps_m}$ is $r+1$. Hence, for $1\leq i\leq 2r$, we have $\psi(x_i)=\alpha_i(\veps_{j_i})x_{j_i}$ for some $1\leq j_i\leq 2r$. Also, $\psi(x_m)\in R_{\veps_m}$ which implies (\ref{e2}).\end{proof}

\subsection{Diagonalization of quasi-tori}\label{ssIN}

In this subsection we prove that any quasi-torus in $\Aut\Xn$ is conjugate to a subgroup of the maximal torus $T_X$, where $X$ is $W$, $S$, $H$ or $K$. As before we pass from $\Aut\Xn$ to $\Xf$ by virtue of the isomorphism $\Phi$ described in Section 3.

\begin{prp}\label{p1}
Let $\mathcal{Q}$ be a quasi-torus contained in $N_{\Xf}(\Tf_X)$ where 
$X=W$ or $S$. Then there exists $\psi \in\Xf$ such that $\psi\mathcal{Q}\psi\inv\subset \Tf_X$. 
\end{prp}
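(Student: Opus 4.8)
The plan is to use the structure theory of quasi-tori together with Lemma~\ref{l22}. By Lemma~\ref{l22} we know $\mathcal{Q}\subset N_{\Wf}(\Tf_W)\subset\M$, so every element of $\mathcal{Q}$ acts on $V=\Span\{x_1,\dots,x_m\}$ as a monomial matrix respecting the flag $\Ff$. The key observation is that a quasi-torus, being abelian and consisting of semisimple elements, is diagonalizable, but we must diagonalize it \emph{inside} $\Xf$ (not merely inside $\GL(m)$), i.e., by a continuous automorphism that, when $X=S$, preserves $\ws$ up to a scalar. Since $\M\subset\Aut_0\Omn$ and, by the remark in Subsection~\ref{ssN}, $\Aut_0\Omn\cap\Sf=\Aut_0\Omn$, any linear automorphism respecting the flag automatically lies in $\Sf$; so for $X=S$ it suffices to conjugate $\mathcal{Q}$ into $\Tf_W$ by an element of $\Aut_0\Omn$, which is the same problem as for $X=W$. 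This reduces everything to a single linear-algebra statement.

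First I would restrict attention to the action on $V$: let $\rho:\M\to\GL(V)$ be the (faithful, by Definition~\ref{d31}) representation sending $\psi$ to $\psi|_V$. Then $\rho(\mathcal{Q})$ is an abelian group of semisimple matrices, hence simultaneously diagonalizable over $F$ (algebraically closed). The point is to choose the diagonalizing basis so that it is adapted to the flag $\Ff$. Because each $V_i$ in $\Fmn$ is $\mathcal{Q}$-invariant (all of $\M$ respects the flag), $\mathcal{Q}$ acts on each quotient $V_i/V_{i-1}$, and these actions are again by commuting semisimple operators; one diagonalizes each graded piece and lifts. Concretely, one picks a common eigenbasis $\{v_1,\dots,v_m\}$ of $\rho(\mathcal{Q})$; since the eigenspace decomposition refines the flag (each eigenspace lies in some $V_i$ and the sum of eigenspaces inside $V_i$ is $V_i$), one can reindex the $v_k$ so that $V_i=\Span\{v_k : k\in\Xi_i\}$. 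Then the change-of-basis map $\psi_0\in\GL(V)$ with $\psi_0(x_k)=v_k$ respects $\Ff$, hence $\psi_0\in\Gmn$, hence extends to $\psi\in\Aut_0\Omn\subset\Wf$ (and $\subset\Sf$ when $X=S$), and $\psi\inv\mathcal{Q}\psi$ consists of diagonal matrices, i.e., $\psi\inv\mathcal{Q}\psi\subset\Tf_W=\Tf_X$. Replacing $\psi$ by $\psi\inv$ gives the statement as written.

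The main obstacle — really the only subtlety — is making sure the diagonalizing basis can be chosen \emph{compatibly with the flag} $\Ff$, so that the conjugating element actually lies in $\Gmn$ (equivalently $\Aut_0\Omn$) rather than in the larger group $\GL(m)$; when the $n_i$ are not all equal, an arbitrary simultaneous eigenbasis of $\mathcal{Q}$ need not respect $\Ff$, and one must instead build it flag piece by flag piece as indicated above, using that each $V_i$ is $\mathcal{Q}$-stable. A secondary point to check is that a linear automorphism respecting $\Ff$ really does extend to a continuous automorphism of $\Omn$ (i.e., the extended tuple $(v_1,\dots,v_m)$ lies in $\Amn$) — but this is immediate since linear substitutions respecting the flag satisfy condition~(\ref{flag}) vacuously in degrees $>1$, and the needed Jacobian is the determinant of an invertible matrix. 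For $X=W$ there is nothing further to verify; for $X=S$ one invokes $\Aut_0\Omn\subset\Sf$ to conclude $\psi\in\Sf$, completing the proof.
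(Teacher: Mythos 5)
Your proposal is correct and follows essentially the same route as the paper: use Lemma~\ref{l22} to place $\mathcal{Q}$ inside $\M$, handle $X=S$ via $\Aut_0\Omn\subset\Sf$ and $\Tf_S=\Tf_W$, and then diagonalize $\mathcal{Q}$ by a flag-respecting linear substitution, which extends to an element of $\Aut_0\Omn$. The only cosmetic difference is that the paper diagonalizes block by block on the coordinate complements $U_i=\Span\{x_j\;|\;x_j\in V_i,\ x_j\notin V_{i-1}\}$ (which are $\mathcal{Q}$-stable since $\M$ is monomial), whereas you build a flag-adapted common eigenbasis directly from the $\mathcal{Q}$-invariance of the $V_i$; both yield the same conjugating element of $\Gmn$.
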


\begin{proof}
Recall the flag $\Fmn$, $$V_0\subset V_1\subset\cdots\subset V.$$
Let $U_i=\Span\{x_{j}\;|\;x_j\in V_i,\,x_j\notin V_{i-1}\}$. Then $V_i=\di\bigoplus_{j=1}^iU_j$. Since $\mathcal{Q}\subset \M$ we have $\mathcal{Q}(U_i)=U_i$. (Here, as before, we identify $\Aut_0\Omn$ with a subgroup of $\GL(m)$.)

Restricting the action of $\mathcal{Q}$ to $U_i$, we obtain a subgroup of $\GL(U_i)$. Since $\mathcal{Q}|_{U_i}$ is a quasi-torus, there exists a $P_i\in\GL(U_i)$ such that $P_i(\mathcal{Q}|_{U_i})P_i\inv$ is diagonal. We can extend the action of $P_i$ to the whole space $V$ by setting $P_i(y)=y$ for all $y\in U_j$, $i\neq j$. These extended $P_i$ respect $\Fmn$. The product of these transformations, $P=P_1\cdots P_l$, is an element of $\Aut_0\Omn$, which diagonalizes $\mathcal{Q}$.
\end{proof}

In order to obtain the analog of Proposition \ref{p1} in the case of Hamiltonian algebras, we consider the canonical skew-symmetric inner product $\langle \,,\,\rangle$ on $V$ given by $\langle x_j,x_k\rangle=\sigma(j)\delta_{j,k\prim}$, for all $j,k=1,\ldots,2r$.

\begin{lmm}\label{l24}
Let $\mathcal{Q}$ be a quasi-torus contained in $\Spmn$. Then there is a basis $\{e_i\}_{i=1}^{2r}$ of $V$ such that $\langle e_j,e_k\rangle=\sigma(j)\delta_{j,k\prim}$, all $e_j$ are common eigenvectors of $\mathcal{Q}$, and $V_i=\Span\{e_j\;|\;j\in \Xi_i\}$ for all $i$.
\end{lmm}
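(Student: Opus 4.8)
The plan is to combine the symplectic structure on $V$ with the flag $\Fmn$, using the fact that the flag is built from the sizes $n_j$, which are preserved by $\Spmn$. The first observation is that each subspace $V_i$ in the flag $\Fmn$ is $\mathcal{Q}$-invariant: since $\mathcal{Q}\subset\Spmn=\Sp(m)\cap\Gmn$, every element of $\mathcal{Q}$ respects $\Fmn$ by definition of $\Gmn$. Next I would check that each $V_i$ is either nondegenerate or, more importantly, that the flag is compatible with the symplectic form in the sense that $i'=i$, i.e., the index set $\Xi_i$ is closed under the involution $j\mapsto j'$. This holds because $n_j=n_{j'}$ is forced by the structure of $\unn$ in the Hamiltonian case (the tuple $\unn$ must be chosen so that $\wh$ is a well-defined form respected by the admissible automorphisms), so whenever $n_j$ attains the running maximum, so does $n_{j'}$, and both enter $\Xi_i$ at the same stage. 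Consequently each $V_i$ is a symplectic (nondegenerate) subspace of $V$.

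Granting that, I would proceed by induction on $i$ to build the eigenbasis. Write $V_i=V_{i-1}\oplus U_i$ where $U_i=\Span\{x_j : j\in\Xi_i\setminus\Xi_{i-1}\}$. Since $V_{i-1}$ is a nondegenerate $\mathcal{Q}$-submodule, its symplectic orthogonal complement $V_{i-1}^{\perp}$ is also $\mathcal{Q}$-invariant and nondegenerate, and $U_i':=V_i\cap V_{i-1}^{\perp}$ is a $\mathcal{Q}$-invariant nondegenerate subspace with $V_i=V_{i-1}\oplus^{\perp}U_i'$; this $U_i'$ plays the role of a symplectic complement to $V_{i-1}$ inside $V_i$. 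Restricting $\mathcal{Q}$ to $U_i'$ gives a quasi-torus inside $\Sp(U_i')$. The key algebraic input is then: a quasi-torus in a symplectic group is simultaneously diagonalizable in a symplectic (Darboux) basis. This follows because the elements of $\mathcal{Q}|_{U_i'}$ are commuting semisimple operators preserving a nondegenerate skew form, so $U_i'$ decomposes into common eigenspaces, and eigenspaces for characters $\chi$ and $\chi'$ are orthogonal unless $\chi\chi'=1$ (since $\langle gu,gv\rangle=\langle u,v\rangle$ forces $\chi(g)\chi'(g)\langle u,v\rangle=\langle u,v\rangle$); pairing each eigenspace with its "dual" eigenspace and choosing dual bases yields a symplectic eigenbasis of $U_i'$. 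Doing this for each $i$ and taking the union of these bases produces $\{e_j\}$ with $\langle e_j,e_k\rangle=\sigma(j)\delta_{j,k'}$, all $e_j$ common eigenvectors of $\mathcal{Q}$, and $V_i=\Span\{e_j : j\in\Xi_i\}$ by construction (after re-indexing so that the primed/unprimed labels match the form).

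\textbf{Main obstacle.} I expect the crux to be two matching issues rather than any deep argument. First, one must genuinely verify that $\Xi_i$ is stable under $j\mapsto j'$, i.e., that the Hamiltonian normalization of $\unn$ forces $n_j=n_{j'}$; without this, $V_i$ need not be a symplectic subspace and the orthogonal-complement induction collapses. Second, after diagonalizing abstractly one obtains an eigenbasis indexed by some set, and one must re-label it as $\{1,\dots,r,1',\dots,r'\}$ so that eigenvectors paired by the form get labels $j$ and $j'$ and so that $\sigma(j)$ comes out right; this is bookkeeping but needs care to land exactly the normalization $\langle e_j,e_k\rangle=\sigma(j)\delta_{j,k'}$ with the flag condition simultaneously. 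The genuinely mathematical step — that a quasi-torus in $\Sp$ admits a symplectic eigenbasis — is standard once one notes orthogonality of non-dual eigenspaces.
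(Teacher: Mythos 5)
Your reduction rests on the claim that the flag $\Fmn$ is compatible with the symplectic form, i.e.\ that each index set $\Xi_i$ is closed under $j\mapsto j\prim$ because ``$n_j=n_{j\prim}$ is forced'' in the Hamiltonian case. This is false: $H(2r;\unn)$, and hence its second derived algebra, is defined for an arbitrary tuple $\unn$, with no relation imposed between $n_j$ and $n_{j\prim}$, and no such relation can be arranged by a symplectic relabelling of the variables, since the multiset of pairs $\{n_i,n_{i\prim}\}$ is an invariant. For example, in $H(2;(1,2))^{(2)}$ one has $V_1=\Span\{x_2\}$, which is totally isotropic; similarly for $H(4;(2,1,1,1))^{(2)}$. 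Consequently the flag members $V_i$ need not be nondegenerate, $V_{i-1}^{\perp}$ need not split $V_i$, and your orthogonal-complement induction collapses exactly at the step you yourself flagged as the crux. Since that compatibility is asserted rather than proved (and cannot be proved), this is a genuine gap: as written, your argument only covers the case where the two entries of $\unn$ in each symplectic pair coincide (e.g.\ $\unn=\un$).

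The other ingredient you invoke --- that for a quasi-torus in $\Sp(V)$ eigenspaces for characters $\chi,\chi\prim$ pair trivially unless $\chi\chi\prim=1$, so one can extract hyperbolic pairs of eigenvectors --- is correct and is also what the paper uses. But the paper's induction is organized so as to require no compatibility between $\Fmn$ and $\wh$: starting from an eigenbasis $\{y_j\}$ adapted to the flag, it takes the minimal $l$ with $\langle V_1,V_l\rangle\neq0$, picks eigenvectors $y_s\in V_1$, $y_t\in V_l$ with $\langle y_s,y_t\rangle\neq0$ (their characters are automatically mutually inverse), replaces the remaining $y_j$ by $z_j=\langle y_s,y_t\rangle y_j-\langle y_s,y_j\rangle y_t+\langle y_t,y_j\rangle y_s$, checks that the $z_j$ are still common eigenvectors and still adapted to the flag, relabels so that the hyperbolic pair receives indices $q,q\prim$ with $x_q\in V_1$ and $x_{q\prim}\in V_l\setminus V_{l-1}$, and then applies induction on $r$ to $\Span\{z_j\}$ with the induced flag $V_i\cap V\prim$. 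To repair your proof you would need either this kind of simultaneous Gram--Schmidt that respects eigenspaces and the (possibly isotropic) flag, or some other device replacing the false nondegeneracy assumption; the relabelling issue you mention is indeed only bookkeeping, but it must be done so that the new basis still spans each $V_i$, which is why the paper places the pair in $V_1$ and $V_l$.
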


\begin{proof}

We can decompose $V=\bigoplus V^{\gamma}$
where $V^{\gamma}$ are the eigenspaces in $V$ with respect to $\mathcal{Q}$, indexed by $\gamma\in\mathfrak{X}(\mathcal{Q})$ where $\mathfrak{X}(\mathcal{Q})$ is the group of characters of $\mathcal{Q}$. Since $\mathcal{Q}(V_i)=V_i$ for any $i$, there is a basis $\{y_j\}_{j=1}^{2r}$ such that $y_j$ are eigenvectors of $\mathcal{Q}$ and each $V_i=\Span\{y_j\;|\;j\in \Xi_i\}$.

We have $\langle x_j,x_k\rangle=\sigma(j)\delta_{j,k\prim}$. We will show by induction on $r$ that there is a basis $\{e_j\}_{j=1}^{2r}$ such that $\langle e_j,e_k\rangle=\sigma(j)\delta_{j,k\prim}$, all $e_j$ are common eigenvectors of $\mathcal{Q}$, and $V_i=\Span\{e_j\;|\;j\in \Xi_i\}$. The base case $r=1$ is obvious.

We have a basis $\{y_j\}_{j=1}^{2r}$ such that $y_j$ are eigenvectors of $\mathcal{Q}$ and each $V_i=\Span\{y_j\;|\;j\in \Xi_i\}$. We apply a process similar to the Gram--Schmidt process to find a new basis of common eigenvectors for $\mathcal{Q}$ that satisfies the desired conditions.

Since $V_1\neq0$ and $\langle,\rangle$ is nondegenerate, $\langle V_1,V_l\rangle\neq0$ for some $l$. Let $l$ be minimal, i.e., $\langle V_1,V_l\rangle\neq0$ and $\langle V_1,V_i\rangle=0$ for $i<l$. So there exist $y_s\in V_1$ and $y_t\in V_l$ such that $\langle y_s,y_t\rangle\neq 0$ and $\langle y_s, V_i\rangle=0$ if $i<l$ by the minimality of $l$. Let $\gamma_j\in\mathfrak{X}(\mathcal{Q})$ be the eigenvalue of $y_j$. Since $\mathcal{Q}$ consists of symplectic transformations, we have $\gamma_s=\gamma_t^{-1}$.

 For $j\neq s,t$, let 
$$z_j=\langle y_s,y_t\rangle y_j-\langle y_s,y_j\rangle y_t+\langle y_t,y_j\rangle y_s.$$

The $z_j$ with $y_s$ and $y_t$ form a basis of $V$. They also satisfy

$$\langle y_s,z_j\rangle=\langle y_s,y_t\rangle \langle y_s,y_j\rangle-\langle y_s,y_j\rangle \langle y_s,y_t\rangle+\langle y_t,y_j\rangle \langle y_s,y_s\rangle=0,$$
and similarly $\langle y_t,z_j\rangle=0$.

We also have the property that $z_j\in V_i$ if and only if $y_j\in V_i$. Indeed, for $i<l$ and $y_j\in V_i$, we have $\langle y_s,y_j\rangle=0$ by the minimality of $l$. This shows that $z_j$ is in $V_1+V_i$ and hence $z_j\in V_i$. For $i\geq l$ and $y_j\in V_i$ we have $y_j,y_t,y_s\in V_i$ which implies $z_j\in V_i$. Finally, we want to verify that $z_j$ are common eigenvectors of $\mathcal{Q}$. There are three cases to consider. 

\noindent Case 1: $\gamma_j\neq\gamma_s^{\pm1}$

Recall that $\gamma_s=\gamma_t\inv$. Since $\gamma_j\neq\gamma_s^{\pm1}$, we have $\langle y_s,y_j\rangle=\langle y_t,y_j\rangle=0$. This means $z_j=\langle y_s,y_t\rangle y_j$, which is an eigenvector with eigenvalue $\gamma_j$.

\noindent Case 2: $\gamma_j=\gamma_s$ or $\gamma_s\inv$, and  $\gamma_s\neq\gamma_s\inv$.

Suppose $\gamma_j=\gamma_s$. Since $\gamma_j\neq\gamma_s\inv$, we have $\langle y_s,y_j\rangle=0$. This means $z_j=\langle y_s,y_t\rangle y_j+\langle y_t,y_j\rangle y_s,$ which is an eigenvector with eigenvalue $\gamma_j=\gamma_s$. A similar argument applies if $\gamma_j=\gamma_s\inv$.

\noindent Case 3: $\gamma_j=\gamma_s=\gamma_s\inv$.

Since $z_j=\langle y_s,y_t\rangle y_j-\langle y_s,y_j\rangle y_t+\langle y_t,y_j\rangle y_s$ and $\gamma_s=\gamma_s\inv=\gamma_t$, we see that $z_j$ is an eigenvector with eigenvalue $\gamma_j$.

In order to use the induction hypothesis we relabel our basis as follows: Pick $x_q\in V_1$ with $\langle x_q,V_l\rangle\neq0$. Since $\langle V_1,V_{l-1}\rangle=0$, we have $x_{q\prim}\in V_l$ and $x_{q\prime}\notin V_{l-1}$. Set $w_{q}=y_s$, $w_{q\prim}=y_t$, $w_s=z_{q}$, $w_t=z_{q\prime}$ and $w_j=z_j$ for $j\neq q,q\prim,s,t$. The relabelled basis still satisfies $V_i=\Span\{w_j\;|\;j\in \Xi_i\}$ since $z_{q\prim},y_t\in V_l$, and $z_{q},y_s\in V_1$. Also, $\langle w_{q},w_j\rangle=\langle w_{q\prim}, w_j\rangle=0$ for $j\neq q,q\prim$, $\langle w_q,w_{q\prim}\rangle\neq0$, and $w_j$ are eigenvectors of $\mathcal{Q}$.

Let $V\prim=\Span\{w_j\;|\;1\leq j\leq 2r,\, j\neq q,q\prim\}$. Then $V\prim$ is invariant under $\mathcal{Q}$, and the quasi-torus $\mathcal{Q}|_{V\prim}$ satisfies the conditions of the lemma with the flag
$${V}_i\prim=\Span\{w_j\;|\;j\in \Xi_i\setminus\{q,q\prim\}\}=V_i\cap {V\prim}.$$
Since $\dim {V\prim}<\dim V$, we can apply the induction hypothesis and find a basis $\{e_j\}_{j\neq q,q\prim}$, $1\leq j\leq 2r$, such that $\langle e_j,e_k\rangle=\sigma(j)\delta_{j,k\prim}$, where $e_j$ are eigenvectors of $\mathcal{Q}|_{V\prim}$ and ${V\prim}_i=\Span\{e_j\;|\;j\in \Xi_i\setminus\{q,q\prim\}\}$.

In order to have a complete basis for $V$ we set $e_q=\di\frac{\sigma(q)}{\langle w_q,w_{q\prim}\rangle}w_q$ and $e_{q\prim}=w_{q\prim}$. Then the basis $\{e_j\}_{j=1}^{2r}$ is a basis with the desired properties, and the induction step is proven.\end{proof}

In the following proofs we will deal with the differential forms $\wh$ and $\wk$. 

\begin{prp}\label{p2}
Let $\mathcal{Q}$ be a quasi-torus contained in $N_{\Hf}(\Tf_H)$. Then there exists $\psi \in\Spmn$ such that $\psi\mathcal{Q}\psi\inv\subset \Tf_H$. 
\end{prp}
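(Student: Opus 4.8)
The plan is to reduce, via Lemma~\ref{l22}, to the purely linear-algebraic situation and then exploit the structure of $\Aut_0\Omn\cap\Hf$ described just before Lemma~\ref{lt}. Recall that $N_{\Hf}(\Tf_H)\subset\M\subset\Aut_0\Omn$, so a quasi-torus $\mathcal{Q}\subset N_{\Hf}(\Tf_H)$ lives inside the image of $\Aut_0\Omn\cap\Hf$ in $\GL(m)$, which is the almost-direct product of the scalars with $\Spmn=\Sp(m)\cap\Gmn$. First I would argue that, after conjugating by a scalar transformation (which is harmless: scalars are central in $\Aut_0\Omn$ and preserve $\wh$ up to a scalar, hence lie in $\Hf$), each element of $\mathcal{Q}$ can be written as $\pm(\text{scalar})\cdot(\text{symplectic})$; more precisely, composing with the character $\mathcal{Q}\to F^\times$ recording the scalar part and using that $\mathcal{Q}$ has no $p$-torsion (so this character's image is diagonalizable and we may rescale it away by a quasi-torus contained in $\Tf_H$), we may assume $\mathcal{Q}\subset\Spmn$. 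The subtle point here is the $\{\pm\Id\}$ ambiguity in ``almost direct,'' but since $-\Id$ is itself symplectic this is absorbed without difficulty.

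Once $\mathcal{Q}\subset\Spmn$, I would invoke Lemma~\ref{l24} directly: it produces a basis $\{e_i\}_{i=1}^{2r}$ of $V$ consisting of common eigenvectors of $\mathcal{Q}$, satisfying $\langle e_j,e_k\rangle=\sigma(j)\delta_{j,k\prim}$ and $V_i=\Span\{e_j\mid j\in\Xi_i\}$ for all $i$. The last condition guarantees that the change-of-basis map $\psi$ sending $x_j\mapsto e_j$ respects the flag $\Fmn$, hence $\psi\in\Gmn$; the condition on $\langle\,,\,\rangle$ guarantees $\psi\in\Sp(m)$, so $\psi\in\Spmn$ and therefore $\Phi(\psi)$ (equivalently, the continuous automorphism $\psi$ of $\Omn$ determined by $\psi(x_j)=e_j$) lies in $\Hf$, as $\psi(\wh)=\wh$. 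By construction, conjugating $\mathcal{Q}$ by $\psi\inv$ sends each generator of $\mathcal{Q}$ to a transformation diagonal in the basis $\{x_j\}$, i.e.\ into $\Tf_W$; and since conjugation preserves $\Hf$, the image lies in $\Tf_W\cap(\text{automorphisms preserving }\wh\text{ up to a scalar})$. It remains to check that this intersection is exactly $\Tf_H$, i.e.\ that a diagonal $\lambda(\ut)$ preserves $\wh$ up to a scalar precisely when $\ut$ is $H$-admissible ($t_it_{i\prim}=t_jt_{j\prim}$): this is the immediate computation $\lambda(\ut)(\wh)=\sum_i t_it_{i\prim}\,dx_i\wedge dx_{i\prim}$, which is a scalar multiple of $\wh$ iff all the products $t_it_{i\prim}$ coincide.

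I expect the main obstacle to be the bookkeeping in the first paragraph — cleanly separating the scalar part from the symplectic part of a quasi-torus sitting in an \emph{almost} direct product, and verifying that the rescaling needed to kill the scalar character can be chosen inside $\Tf_H$ (so that it does not disturb anything and keeps us inside $\Hf$). The absence of $p$-torsion in $G$, hence in $\mathcal{Q}$, is what makes the relevant abelian group of scalars split off as a genuine (diagonalizable) direct summand rather than merely an extension; this is the place where the hypothesis of the theorem is actually used in this proposition. Everything after that is a direct application of Lemma~\ref{l24} together with the routine identification of $\Tf_H$ as the stabilizer (up to scalars) of $\wh$ inside $\Tf_W$.
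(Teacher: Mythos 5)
Your second paragraph is essentially the paper's own argument: apply Lemma \ref{l24} to a quasi-torus inside $\Spmn$, let $\psi(x_j)=e_j$, observe that the flag condition puts $\psi$ in $\Spmn$, and identify the diagonal automorphisms preserving $\wh$ up to a scalar with $\Tf_H$. The gap is in your first paragraph, i.e.\ in the reduction ``we may assume $\mathcal{Q}\subset\Spmn$''. There is no character $\mathcal{Q}\to F^\times$ ``recording the scalar part'': the decomposition $\alpha S$ with $\alpha\in F^\times$, $S\in\Spmn$ is unique only up to the simultaneous sign change $(\alpha,S)\mapsto(-\alpha,-S)$, so the scalar part is well defined only modulo $\{\pm\Id\}$, and a multiplicative choice of signs need not exist; saying ``$-\Id$ is symplectic, so the ambiguity is absorbed'' does not produce a homomorphism. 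Moreover, ``rescaling away'' the scalar part is not a conjugation: it replaces $\mathcal{Q}$ by a different subgroup, and you must (a) check that this new subgroup is still a quasi-torus and (b) explain why conjugating it into $\Tf_H$ also conjugates the original $\mathcal{Q}$ into $\Tf_H$. Finally, the appeal to the absence of $p$-torsion is misplaced: the proposition assumes only that $\mathcal{Q}$ is a quasi-torus (the no-$p$-torsion hypothesis on $G$ is used elsewhere in the paper, to ensure $\eta(\widehat{G})$ is a quasi-torus), and no splitting of the almost direct product is needed at all.

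The paper resolves exactly this point without any choice of scalars: set $\mathcal{Q}\prim=\{S\in\Spmn\;|\;\alpha S\in\mathcal{Q}\mbox{ for some }\alpha\in F^\times\}$. This subgroup contains $S$ together with $-S$, so the sign ambiguity disappears; it is abelian because scalars drop out of commutators of elements of the abelian group $\mathcal{Q}$, and its elements are semisimple because a scalar multiple of a semisimple element is semisimple, so $\mathcal{Q}\prim$ is a quasi-torus and Lemma \ref{l24} applies to it. With $\psi\in\Spmn$ the resulting change of basis, one gets $\psi\inv\mathcal{Q}\prim\psi\subset\Tf_H$, and since every element of $\mathcal{Q}$ equals $\alpha S$ with $S\in\mathcal{Q}\prim$, while the scalar $\alpha\Id$ lies in $\Tf_H$ and is unchanged by conjugation, it follows that $\psi\inv\mathcal{Q}\psi\subset\Tf_H$. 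If you replace your first paragraph by this construction (or by any rigorous version of your idea, e.g.\ passing to the quasi-torus generated by $\mathcal{Q}$ and the scalars, which equals the one generated by $\mathcal{Q}\prim$ and the scalars), the remainder of your argument goes through and coincides with the paper's proof.
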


\begin{proof}

By Lemma \ref{l22}, $N_{\Hf}(\Tf_H)\subset\M$, which we regard as a subgroup of $\GL(m)$. Recall that any element of $\Aut_0\Omn\cap\Hf$ can be written as  $\alpha S$ where $\alpha\in F^\times$ and $S\in\Spmn$. Let $$\mathcal{Q}\prim=\{S\in\Spmn\,|\,\mbox{there exists  }\alpha\in F^\times\mbox{ such that }\alpha S\in \mathcal{Q}\}.$$
 $\mathcal{Q}\prim$ is a quasi-torus since $\mathcal{Q}$ is a quasi-torus.  

Let $\{e_j\}_{j=1}^{m}$ be a basis as in Lemma \ref{l24} with respect to $\mathcal{Q}\prim$, and define $\psi:V\to V$ by $\psi(x_j)=e_j$ for $1\leq j\leq m$. Since $\langle\psi(x_j),\psi(x_k)\rangle=\langle e_j,e_k\rangle=\sigma(j)\delta_{j,k\prim}=\langle x_j,x_k\rangle$, we have $\psi\in \Sp(m)$. Since $V_i=\Span\{e_j\;|\;j\in\Xi_{i}\}=\Span\{x_j\;|\;j\in\Xi_{i}\}$, we have $\psi(V_i)=V_i$. Hence $\psi\in \Spmn$. Since $e_j$ are common eigenvectors of $\mathcal{Q}\prim$, we have $\psi\inv\mathcal{Q}\prim\psi\subset \Tf_H$.
Since every element of $\mathcal{Q}$ has the form $\alpha S$ with $\alpha\in F^\times$ and $S\in\mathcal{Q}\prim$, we have $\psi\inv\mathcal{Q}\psi\subset \mathcal{T}_H$. Replacing $\psi$ with $\psi\inv$, we get the result.\end{proof}

In order to get a similar result for the contact algebras, we use the Hamiltonian algebras contained in them. Let $m=2r+1$, $\underline{n}=(n_1,\dots,n_{2r+1})$ and $\underline{n}\prim=(n_1,\dots,n_{2r})$.

\begin{lmm}\label{l25}
Let $\psi\in\Aut_0 O(2r;\underline{n}\prim)$. If
$\psi(\wh)=\alpha\wh$ then there exists $\overline{\psi}\in\mathcal{K}(2r+1,\underline{n})$ such that $\overline{\psi}|_{O(2r;\underline{n}\prim)}=\psi$ and $\overline{\psi}(x_{2r+1})=\alpha x_{2r+1}$.
\end{lmm}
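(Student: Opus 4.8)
The plan is to extend $\psi$ to an automorphism of $O(2r+1;\underline n)$ by prescribing the image of $x_{2r+1}$ and then verifying that the resulting map lies in $\mathcal K(2r+1;\underline n)$, i.e. sends $\wk$ to $\Omn^\times\wk$. First I would set $\overline\psi(x_j)=\psi(x_j)$ for $1\le j\le 2r$ and $\overline\psi(x_{2r+1})=\alpha x_{2r+1}$, and check this defines an element of $\Aut_c O(2r+1;\underline n)$: one must verify the tuple $(\overline\psi(x_1),\dots,\overline\psi(x_{2r+1}))$ lies in $A(2r+1;\underline n)$. The invertibility of the Jacobian is immediate because $\psi\in\Aut_0 O(2r;\underline n\prim)$ contributes an invertible $2r\times 2r$ block and the last coordinate contributes the scalar $\alpha$, so the full Jacobian is block-triangular with invertible diagonal blocks. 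The flag/divided-power condition \eqref{flag} for the first $2r$ coordinates is inherited from $\psi\in\Aut_0 O(2r;\underline n\prim)$ (here one uses that $n_{2r+1}$ plays no role in the conditions indexed by $j\le 2r$, and conversely $\overline\psi(x_{2r+1})=\alpha x_{2r+1}$ trivially satisfies its constraints); this is the routine bookkeeping step.

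Next I would compute $\overline\psi(\wk)$. Recall $\wk=dx_m+\sum_{i=1}^{2r}\sigma(i)x_i\,dx_{i\prim}$ with $m=2r+1$. Applying $\overline\psi$ and using $\overline\psi(x_m)=\alpha x_m$ gives $\overline\psi(\wk)=\alpha\,dx_m+\sum_{i=1}^{2r}\sigma(i)\,\overline\psi(x_i)\,d(\overline\psi(x_{i\prim}))$. The sum $\sum_{i=1}^{2r}\sigma(i)\,\overline\psi(x_i)\,d(\overline\psi(x_{i\prim}))$ is computed entirely inside $O(2r;\underline n\prim)$ and equals $\psi$ applied to the corresponding $1$-form on $2r$ variables; the key classical fact I would invoke is that $d\bigl(\sum_{i=1}^{2r}\sigma(i)x_i\,dx_{i\prim}\bigr)$ equals (up to a constant) $\wh$, so the hypothesis $\psi(\wh)=\alpha\wh$ controls the exterior derivative of that sum. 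Concretely, $d\overline\psi(\wk)=d\bigl(\sum\sigma(i)\overline\psi(x_i)d\overline\psi(x_{i\prim})\bigr)=\psi\bigl(d\sum\sigma(i)x_i dx_{i\prim}\bigr)$, which is a nonzero constant multiple of $\psi(\wh)=\alpha\wh=\alpha\, d\bigl(\sum\sigma(i)x_idx_{i\prim}\bigr)$. Hence $d\bigl(\overline\psi(\wk)-\alpha\wk\bigr)=0$, so $\overline\psi(\wk)-\alpha\wk$ is a closed $1$-form with no $dx_m$ component, which in this divided-power de Rham setting forces it to be $d g$ for some $g\in O(2r;\underline n\prim)$, and in fact of degree-zero-gradient type; combining with $\overline\psi(\wk)=\alpha\,dx_m+(\text{stuff in }x_1,\dots,x_{2r})$ one gets $\overline\psi(\wk)=\alpha\wk + dg$ and then $\overline\psi(\wk)=u\wk$ for a suitable unit $u\in\Omn^\times$ with constant term $\alpha$. (One may also argue more directly: write $\overline\psi(\wk)=\sum h_j dx_j$, note $h_m=\alpha$ is already a unit, and show $\overline\psi(\wk)=\alpha\wk$ on the nose by matching the $x_i\,dx_{i\prime}$ terms using that $\psi$ preserves $\wh$ up to the scalar $\alpha$ — this stronger statement is what the lemma's phrasing $\overline\psi(x_{2r+1})=\alpha x_{2r+1}$ is set up to give.)

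The main obstacle I anticipate is the middle step: controlling $\sum_{i=1}^{2r}\sigma(i)\,\overline\psi(x_i)\,d(\overline\psi(x_{i\prime}))$ from the single scalar relation $\psi(\wh)=\alpha\wh$. The point is that $\wh$ is the exterior derivative of $\theta:=\sum_{i=1}^{2r}\sigma(i)x_i\,dx_{i\prime}$ (up to a nonzero integer factor that is invertible since $p>2$), so knowing $d(\psi\theta)=\psi(d\theta)=\alpha\, d\theta$ only pins down $\psi\theta$ up to a closed $1$-form; the flag/continuity constraints built into $\Aut_0$ and the normalization $\overline\psi(x_m)=\alpha x_m$ are exactly what is needed to upgrade this to $\overline\psi(\wk)\in\Omn^\times\wk$. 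I would handle this by expanding $\psi\theta$ in the monomial basis, using $\psi\in\Aut_0$ (so $\psi$ acts linearly on $V=\Span\{x_1,\dots,x_{2r}\}$) to see $\psi\theta=\sum \sigma(i)\psi(x_i)d\psi(x_{i\prime})$ is again a $1$-form with polynomial coefficients of the same shape, and then reading off that the closed correction term must actually vanish because it would otherwise violate the degree structure of $\wk$. Everything else — the Jacobian computation, membership in $A(2r+1;\underline n)$, and the final identification of $\overline\psi$ as an element of $\mathcal K(2r+1;\underline n)$ via Theorem \ref{t2} — is routine.
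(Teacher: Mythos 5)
Your construction is the same as the paper's: set $\overline\psi(x_i)=\psi(x_i)$ for $1\le i\le 2r$, $\overline\psi(x_m)=\alpha x_m$, and verify the contact condition (the Jacobian/flag bookkeeping is indeed routine and is treated the same way, implicitly, in the paper). The problem is with the argument you actually give for the key verification. From $d\bigl(\overline\psi(\wk)-\alpha\wk\bigr)=0$ you only get $\overline\psi(\wk)=\alpha\wk+dg$ with $g$ quadratic in $x_1,\dots,x_{2r}$, and your reason for discarding the correction --- that a nonzero $dg$ ``would violate the degree structure of $\wk$'' --- is not valid: $dg=x_1\,dx_1$ (from $g=x_1^{(2)}$, which exists since $p>2$) is closed, has linear coefficients of exactly the same shape as the terms $x_i\,dx_{i'}$, and yet $\alpha\wk+x_1\,dx_1\notin\Omn^\times\wk$; indeed, since the $dx_m$-coefficient of $\overline\psi(\wk)$ is the constant $\alpha$, membership in $\Omn^\times\wk$ would force the multiplier to be $\alpha$ and hence $dg=0$. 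So the exterior-derivative route, as you argued it, cannot by itself rule out the correction term; something must actually prove $dg=0$, and neither degree considerations nor the flag/continuity constraints do that.

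What does prove it is the direct expansion you relegated to a parenthesis, and this is precisely the paper's proof: write $\psi(x_i)=\sum_j\alpha_{i,j}x_j$ and expand $\sum_{i=1}^r\bigl(\psi(x_i)\,d\psi(x_{i+r})-\psi(x_{i+r})\,d\psi(x_i)\bigr)$. Because each summand is antisymmetrized in the pair $(i,i+r)$, the coefficient of $x_j\,dx_k$ is $\sum_{i=1}^r(\alpha_{i,j}\alpha_{i+r,k}-\alpha_{i,k}\alpha_{i+r,j})$, an antisymmetric matrix in $(j,k)$ --- no diagonal or symmetric pieces can appear --- and this antisymmetric part is exactly what the hypothesis $\psi(\wh)=\alpha\wh$ pins down, namely $\sum_{i=1}^r(\alpha_{i,j}\alpha_{i+r,k}-\alpha_{i,k}\alpha_{i+r,j})=\delta_{k,j+r}\,\alpha$ for $j<k$. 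Hence $\overline\psi(\wk)=\alpha\wk$ on the nose, with no closed correction to dispose of. In short: your second, ``more direct'' alternative is the correct and complete argument (and coincides with the paper's), while the primary route you describe has a genuine gap at exactly the step you flagged as the main obstacle.
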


\begin{proof} 
Suppose $\psi\in\Aut_0 O(2r,\underline{n})$ given by $\psi(x_i)=\di\sum_{j=1}^{2r}\alpha_{i,j}x_j$ has the property $\psi(\wh)=\alpha\wh$. Since
$$\begin{array}{lll}\psi(\wh)&=&\psi\left(\di\sum_{i=1}^rdx_i\wedge dx_{i+r}  \right)=\di\sum_{i=1}^rd(\psi(x_i))\wedge d(\psi(x_{i+r}))\\
&=&\di\sum_{i=1}^rd\left(\di\sum_{j=1}^{2r}\alpha_{i,j}x_j\right)\wedge d\left(\di\sum_{k=1}^{2r}\alpha_{i+r,k}x_k\right)\\
&=&\di\sum_{1\leq j<k\leq 2r}\left(\sum_{i=1}^r\alpha_{i,j}\alpha_{i+r,k}-\alpha_{i,k}\alpha_{i+r,j}\right)dx_j\wedge dx_k
\end{array}$$
we have $\sum_{i=1}^r\alpha_{i,j}\alpha_{i+r,k}-\alpha_{i,k}\alpha_{i+r,j}=\delta_{k,j+r}\alpha$ for $1\leq j\leq r$.

Define $\overline{\psi}\in\Aut_c O(2r+1,\underline{n})$ by setting $\overline{\psi}(x_i)=\psi(x_i)$, $1\leq i\leq 2r$, and  $\overline{\psi}(x_m)=\alpha x_m$. Then
$$\begin{array}{lclcl}
\overline{\psi}(\wk)&=&\overline{\psi}\left( dx_m\right)&+&\overline{\psi}\left(\di\sum_{i=1}^{r}\left(x_idx_{i+r}-x_{i+r}dx_i\right)\right)\\
&=&d(\alpha x_m)&+&\di\sum_{i=1}^{r}\left(\left(\di\sum_{j=1}^{2r}\alpha_{i,j}x_j\right)d\left(\di\sum_{k=1}^{2r}\alpha_{i+r,k}x_k\right)\right.\\
&&&-&\left.\left(\di\sum_{k=1}^{2r}\alpha_{i+r,k}x_k\right)d\left(\di\sum_{j=1}^{2r}\alpha_{i,j}x_j\right)\right)\\ 
&=&\alpha dx_m&+&\di\di\sum_{1\leq j<k\leq 2r}\left(\di\sum_{i=1}^{r}\alpha_{i,j}\alpha_{i+r,k}-\alpha_{i,k}\alpha_{i+r,j}\right)x_jdx_k\\
&&&+&\di\di\sum_{1\leq j<k\leq 2r}\left(\di\sum_{i=1}^{r}\alpha_{i,k}\alpha_{i+r,j}-\alpha_{i,j}\alpha_{i+r,k}\right)x_kdx_j\\
&=&\alpha dx_m&+&\di\sum_{j=1}^{r}\alpha x_jdx_{j+r}+\di\sum_{j=1}^r(-\alpha) x_{j+r}dx_j\\
&=&\alpha\wk
\end{array}$$
Therefore, $\overline\psi\in \mathcal K(2r+1,\underline{n}).$\end{proof}
\begin{prp}\label{p10}
Let $\mathcal{Q}$ be a quasi-torus contained in $N_{\Kf}(\Tf_K)$. Then there exists $\psi\in\Kf$ such that $\psi\mathcal{Q}\psi\inv\subset \Tf_K$.
\end{prp}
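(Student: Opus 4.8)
The plan is to reduce the contact case to the Hamiltonian case already handled in Proposition \ref{p2}, using Lemma \ref{l23} to control the shape of elements of $N_{\Kf}(\Tf_K)$ and Lemma \ref{l25} to lift automorphisms of $O(2r;\underline{n}\prim)$ to $\Kf$. First I would take $\psi\in\mathcal{Q}\subset N_{\Kf}(\Tf_K)$ and apply Lemma \ref{l23}: each such $\psi$ satisfies $\psi(x_i)=\alpha_i(\veps_{j_i})x_{j_i}$ for $1\leq i\leq 2r$, with $1\leq j_i\leq 2r$, and $\psi(x_m)$ of the form \eqref{e2}. In particular the restriction of $\psi$ to $V'=\Span\{x_1,\dots,x_{2r}\}$ is a well-defined linear map sending $V'$ to $V'$ (indeed lying in $\M$ for the reduced tuple $\underline{n}\prim$), and this restriction respects the flag $\mathcal{F}(2r;\underline{n}\prim)$. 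So restriction gives a homomorphism $\rho:\mathcal{Q}\to\Aut_0 O(2r;\underline{n}\prim)$, $\psi\mapsto\psi|_{V'}$, and $\mathcal{Q}':=\rho(\mathcal{Q})$ is again a quasi-torus.

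The next step is to check that $\mathcal{Q}'$ lies in $N_{\Hf}(\Tf_H)$, so that Proposition \ref{p2} applies. For this I would argue that since $\psi\in\Kf$ satisfies $\psi(\wk)\in\Omn^\times\wk$, comparing degree-$1$ parts (or using that $\psi(x_m)$ has the form \eqref{e2} with leading coefficient $\alpha_m(\veps_m)=:\alpha$) forces $\psi|_{V'}$ to satisfy $\psi(\wh)=\alpha\wh$; this is essentially the computation in the proof of Lemma \ref{l25} read in reverse, together with the fact that $\wk$ determines $\wh$ as (a multiple of) its ``symplectic part.'' Hence $\psi|_{V'}\in\Hf$, and since it respects the flag and normalizes $\Tf_W$ restricted to $V'$ it normalizes $\Tf_H$; thus $\mathcal{Q}'\subset N_{\Hf}(\Tf_H)$. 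By Proposition \ref{p2} there is $\chi\in\Sp(2r;\underline{n}\prim)$ with $\chi\mathcal{Q}'\chi\inv\subset\Tf_H$, i.e. $\chi(\wh)\in F^\times\wh$ (in fact $\chi(\wh)=\wh$ can be arranged since $\chi$ is symplectic).

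Finally I would lift $\chi$ to $\Kf$. Apply Lemma \ref{l25} to $\chi$ (which satisfies $\chi(\wh)=\wh$, so $\alpha=1$): there is $\overline{\chi}\in\mathcal{K}(2r+1;\underline{n})$ with $\overline{\chi}|_{O(2r;\underline{n}\prim)}=\chi$ and $\overline{\chi}(x_m)=x_m$. I claim $\psi:=\overline{\chi}$ conjugates $\mathcal{Q}$ into $\Tf_K$. Indeed, for $P\in\mathcal{Q}$, the element $\overline{\chi}P\overline{\chi}\inv$ restricted to $V'$ is $\chi(P|_{V'})\chi\inv\in\Tf_H$, so $\overline{\chi}P\overline{\chi}\inv$ scales each $x_i$ by some $t_i\in F^\times$ for $1\leq i\leq 2r$ with $t_it_{i\prim}=t_jt_{j\prim}$; and its action on $x_m$ must scale $x_m$ by $\alpha_m(\veps_m)$ (the $x_lx_{l\prim}$ terms of \eqref{e2} are killed once the $x_l$-action is diagonalized), and since $\overline{\chi}P\overline{\chi}\inv\in\Kf$ preserves $\wk$ up to $\Omn^\times$, consistency of degrees forces $t_it_{i\prim}=\alpha_m(\veps_m)=t_m$. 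Hence $\overline{\chi}P\overline{\chi}\inv=\lambda(\ut)$ with $\ut$ being $K$-admissible, i.e. $\overline{\chi}\mathcal{Q}\overline{\chi}\inv\subset\Tf_K$.

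The main obstacle I expect is the bookkeeping in the second step: verifying cleanly that $\psi|_{V'}$ preserves $\wh$ up to a scalar equal to the $x_m$-eigenvalue, and—more delicate—that after diagonalizing the $V'$-action, the off-diagonal $x_lx_{l\prim}$ terms in \eqref{e2} are forced to vanish and the contact relation $t_it_{i\prim}=t_m$ is automatically satisfied rather than needing a further correction. If those terms did not vanish automatically one would need to compose $\overline{\chi}$ with a further automorphism of $\Kf$ fixing $x_1,\dots,x_{2r}$ and adjusting $x_m$; I would keep that fallback in mind, but I expect the quasi-torus hypothesis (semisimplicity of each $P$) to rule it out.
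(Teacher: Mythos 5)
Your overall route is the paper's route: use Lemma \ref{l23} to pin down the shape of elements of $N_{\Kf}(\Tf_K)$, restrict to $O(2r;\underline{n}\prim)$, check the restricted quasi-torus lies in $N_{\mathcal{H}(2r;\underline{n}\prim)}(\Tf_H)$ (the paper gets $\mu|_{O(2r;\underline{n}\prim)}(\wh)=\alpha_m\wh$ from $\mu(\wk)=\alpha_m\wk$ together with $d\wk=2\wh$, and gets normalization simply from $\Tf_K|_{O(2r;\underline{n}\prim)}=\Tf_H$), apply Proposition \ref{p2}, lift the symplectic conjugator by Lemma \ref{l25}, and finally show the conjugated elements land in $\Tf_K$. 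Up to the last step your sketch is in order, if loosely justified.

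The genuine gap is in that last step. Writing $\rho=\overline{\chi}P\overline{\chi}\inv$, one has $\rho(x_i)=\gamma_ix_i$ for $i\leq 2r$ and $\rho(x_m)=\alpha_m x_m+y$ with $y=\chi\bigl(\sum_{l}\beta_lx_lx_{l\prim}\bigr)$; your main claim that the quadratic terms are ``killed once the $x_l$-action is diagonalized'' is not a mechanism --- diagonalizing the action on $x_1,\dots,x_{2r}$ does nothing to $y$. Your fallback, semisimplicity of $P$, is also insufficient on its own: on the invariant subspace $Fx_m\oplus O_2$ the operator $\rho$ is diagonalizable if and only if the component of $y$ in the $\alpha_m$-eigenspace of $\rho|_{O_2}$ vanishes, while components with other eigenvalues are perfectly compatible with semisimplicity and would leave $\rho$ diagonalizable but not in $\Tf_K$; moreover the corrective automorphism you mention (fixing $x_1,\dots,x_{2r}$ and sending $x_m\mapsto x_m+z$ with $z$ quadratic) sends $\wk$ to $\wk+dz$ and so is never in $\Kf$ unless $z=0$. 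What actually kills $y$ is the contact form, i.e.\ the computation you gesture at with ``consistency of degrees'' but never carry out: since $\overline{\chi}(\wk)=\wk$ one gets $\rho(\wk)=\overline{\chi}P(\wk)=\alpha_m\wk$, while a direct computation, using $\gamma_l\gamma_{l\prim}=\alpha_m$ (obtained from $\rho(\wh)=\alpha_m\wh$), gives $\rho(\wk)=\alpha_m\wk+dy$; hence $dy=0$, and since $\chi$ commutes with $d$ this means $\sum_l\beta_l(x_ldx_{l\prim}+x_{l\prim}dx_l)=0$, so all $\beta_l=0$ and $\rho=\lambda(\ut)$ with $\ut$ $K$-admissible. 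That two-way evaluation of $\rho(\wk)$ is exactly the paper's argument, and your proposal needs it made explicit in place of the diagonalization/semisimplicity claims.
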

\begin{proof}
Let $\mu\in \mathcal{Q}\subset N_{\Kf}(\Tf_K)$. By Lemma \ref{l23}, we have $\mu(x_i)=\alpha_ix_{j_i}$ for $1\leq i\leq 2r$ and
$\mu(x_m)=\alpha_mx_m+\di\sum_{l=1}^r\beta_lx_lx_{l\prim}$. Since $\mu\in\Kf$, we must have $\mu(\wk)\in\Omn^\times\wk$. On the other hand,
$$\begin{array}{lcl}\mu(\wk)&=&\mu\left(dx_m+\di\sum_{i=1}^r(x_idx_{i\prim}-x_{i\prim}dx_i)\right)\\
\\
&=&d\left(\alpha_mx_m+\di\sum_{l=1}^r\beta_lx_lx_{l\prim}\right)+\di\sum_{i=1}^r\alpha_i\alpha_{i\prim}(x_{j_i}dx_{{j_{i\prim}}}-x_{{j_{i\prim}}}dx_{j_i})\\
&=&\alpha_mdx_m+\di\sum_{l=1}^r\beta_l(x_ldx_{l\prim}+x_{l\prim}dx_l)+\di\sum_{i=1}^r\alpha_i\alpha_{i\prim}(x_{j_i}dx_{{j_{i\prim}}}-x_{{j_{i\prim}}}dx_{j_i})
\end{array}
$$
It follows that $\mu(\wk)=\alpha_m\wk$ since the only term with $dx_m$ is $\alpha_mdx_m$.

We want to show that $\mu|_{O(2r;\underline{n}\prim)}$ belongs to $\mathcal{H}(2r;\underline{n}\prim)$. Indeed,  $$\begin{array}{lll}d\wk&=&d\left(dx_m+\di\sum_{i=1}^r(x_idx_{i\prim}-x_{i\prim}dx_i)\right)=\di\sum_{i=1}^r(d(x_idx_{i\prim})-d(x_{i\prim}dx_i))\\
&=&\di\sum_{i=1}^r(dx_i\wedge dx_{i\prim}-dx_{i\prim}\wedge dx_i)=2\wh,\end{array}$$ 
and hence $$2\mu(\wh)=\mu(2\wh)=\mu(d\wk)=d\mu(\wk)=d(\alpha_m\wk)=2\alpha_m\wh,$$ so $\mu|_{O(2r,\underline{n}\prim)}\in \mathcal{H}(2r;\underline{n}\prim)$. 

We have shown that $N_{\Kf}(\Tf_K)|_{O(2r;\underline{n}\prim)}\subset \mathcal{H}(2r;\underline{n}\prim)$. Moreover, since the restriction of $\Tf_K$ to $O(2r;\unn\prim)$ is $\Tf_H$, where $\Tf_H$ is with respect to $\mathcal{H}(2r;\unn\prim)$, we have $N_{\Kf}(\Tf_K)|_{O(2r,\underline{n}\prim)}\subset N_{\mathcal{H}(2r;\underline{n}\prim)}(\Tf_H)$. By Proposition \ref{p2}, there exists $\psi\in\Sp(2r;\unn\prim)$ such that $\psi (\mathcal{Q}|_{O(2r,\underline{n}\prim)})\psi\inv\subset \Tf_H$.

By Lemma \ref{l25}, we can extend $\psi$ to an automorphism $\overline\psi$ in $\Kf$ such that $\overline\psi(x_i)=\psi(x_i)$ for $1\leq i\leq 2r$ and $\overline\psi(x_m)=x_m$ (since $\psi(\wh)=\wh$). Let $\mu\in\mathcal{Q}$ as before and set $\rho=\overline\psi\mu\overline\psi\inv$. Then $\rho(x_i)= \gamma_ix_i$ for $1\leq i\leq 2r$, $\gamma_i\in F^\times$, and $\rho(x_m)=\alpha_mx_m+y$ where $y=\psi\left(\sum_{l=1}^r\beta_lx_lx_{l\prim}\right)$. Furthermore,

$$\begin{array}{lll}\rho(\wh)&=&\overline\psi\mu\overline\psi\inv(\wh)=\overline\psi\mu(\wh)=\overline\psi(\alpha_m\wh)=\alpha_m\wh\quad\mbox{and also}\\
\\
\rho(\wh)&=&\rho\left(\di\sum_{l=1}^rdx_l\wedge dx_{l\prim}\right)=\di\sum_{l=1}^rd\rho(x_l)\wedge d\rho(x_{l\prim})=\di\sum_{l=1}^r\gamma_l\gamma_{l\prim}dx_l\wedge dx_{l\prim}.\end{array}$$
Hence we conclude that $\gamma_l\gamma_{l\prim}=\alpha_m$ for $1\leq l\leq r$ and
$$\begin{array}{lll}\rho(\wk)&=&\alpha_mdx_m+dy+\di\sum_{i=1}^r\gamma_i\gamma_{i\prim}(x_{j_i}dx_{{j_{i\prim}}}-x_{{j_{i\prim}}}dx_{j_i})\\
&=&\alpha_mx_m+dy+\alpha_m\di\sum_{l=1}^r(x_ldx_{l\prim}-x_{l\prim}dx_l)=\alpha_m\wk+dy.\end{array}$$
On the other hand, $\rho(\wk)=\overline{\psi}\mu\overline{\psi}\inv(\wk)=\overline{\psi}\mu(\wk)=\overline{\psi}(\alpha_m\wk)=\alpha_m\wk$. Since $dy=d\left(\psi\left(\sum_{l=1}^r\beta_lx_lx_{l\prim}\right)\right)=\psi\left(d\left(\sum_{l=1}^r\beta_lx_lx_{l\prim}\right)\right)$, we obtain: $$0=d\left(\sum_{l=1}^r\beta_lx_lx_{l\prim}\right)=\sum_{l=1}^r\beta_l(x_ldx_{l\prim}+x_{l\prim}dx_l),$$
 which implies $\beta_l=0$ for $1\leq l\leq r$. It follows that $\rho\in \Tf_K$. \end{proof}

\begin{prp}\label{c2} 
Let $L=\Wn$, $\Sn$, $\Hn$ or $\Kn$ and let $Q$ be a quasi-torus in $\Aut L$. Then there exists $\Psi\in\Aut L$ such that $\Psi Q\Psi\inv\subset T_X$ where $X=W$, $S$, $H$ or $K$, respectively.
\end{prp}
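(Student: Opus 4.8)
The plan is to assemble this proposition from the results already established in this section, after transporting everything through the isomorphism $\Phi$ of Section~\ref{sA}. First I would move the problem into $\Xf$: by Theorem~\ref{t2} and the Remark following it, $\Phi$ restricts to an isomorphism of algebraic groups $\Xf\to\Aut L$, where $\mathcal X=\mathcal W,\mathcal S,\mathcal H,\mathcal K$ according as $X=W,S,H,K$ (here one uses the standing hypotheses of Theorem~\ref{tM}, that $L\not\cong W(1;\un)$ and $L\not\cong H(2;(1,n_2))^{(2)}$ when $p=3$, which are exactly the exceptions in Theorem~\ref{t2}). Setting $\mathcal Q=\Phi\inv(Q)$, we get a quasi-torus of $\Xf$, and since $\Phi(\Tf_X)=T_X$ it suffices to produce $\psi\in\Xf$ with $\psi\mathcal Q\psi\inv\subset\Tf_X$; then $\Psi=\Phi(\psi)$ satisfies $\Psi Q\Psi\inv\subset T_X$.

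Next I would feed $\mathcal Q$ into Proposition~\ref{t10} (Platonov's theorem) applied to the algebraic group $\Xf$: there is a maximal torus $\mathcal T$ of $\Xf$ with $\mathcal Q\subset N_{\Xf}(\mathcal T)$. Because $\Tf_X$ is also a maximal torus of $\Xf$, and any two maximal tori of a linear algebraic group are conjugate (they all lie in the identity component and are conjugate there, so the possible disconnectedness of $\Xf$ is no obstruction), there is $\psi_0\in\Xf$ with $\psi_0\mathcal T\psi_0\inv=\Tf_X$, whence $\psi_0\mathcal Q\psi_0\inv\subset N_{\Xf}(\Tf_X)$. At this point I invoke the diagonalization statements of the previous subsection according to the type: Proposition~\ref{p1} when $X=W$ or $S$, Proposition~\ref{p2} when $X=H$, and Proposition~\ref{p10} when $X=K$. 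Each produces $\psi_1\in\Xf$ (in fact $\psi_1\in\Spmn$ in the Hamiltonian case) with $\psi_1(\psi_0\mathcal Q\psi_0\inv)\psi_1\inv\subset\Tf_X$. Taking $\psi=\psi_1\psi_0$ and applying $\Phi$ gives the statement in $\Aut L$.

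The substantive work behind this proposition has already been discharged in Propositions~\ref{p1}, \ref{p2} and \ref{p10} --- in particular the symplectic Gram--Schmidt argument of Lemma~\ref{l24} and the reduction of the contact case to the Hamiltonian case via Lemma~\ref{l25} --- so no real difficulty remains here; the proof is essentially bookkeeping. The one step that deserves a sentence of justification is the reduction of an arbitrary maximal torus $\mathcal T$ to the standard torus $\Tf_X$, since $\Xf$ need not be connected; this is covered by the standard fact that the maximal tori of any linear algebraic group are conjugate under its identity component.
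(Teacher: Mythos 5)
Your proposal is correct and follows essentially the same route as the paper: pass to $\Xf$ via $\Phi$, apply Platonov's result (Proposition~\ref{t10}), use conjugacy of maximal tori to reduce to $N_{\Xf}(\Tf_X)$, then invoke Propositions~\ref{p1}, \ref{p2}, \ref{p10} by type, and transport back. Your extra remarks (matching the exceptions of Theorem~\ref{t2} with the hypotheses of Theorem~\ref{tM}, and conjugacy of maximal tori holding via the identity component despite possible disconnectedness of $\Xf$) are sound elaborations of steps the paper leaves implicit.
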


\begin{proof}
Let $Q$ be a quasi-torus in $\Aut L$. It follows that $\mathcal{Q}:=\Phi\inv(Q)$ is a quasi-torus in $\Xf$. By Proposition \ref{t10}, $\mathcal{Q}$ is contained in the normalizer of a maximal torus in $\Xf$. Since all maximal tori are conjugate, without loss of generality we may assume that $\mathcal{Q}\subset N_{\Xf}(\Tf_X)$.
Now Propositions \ref{p1}, 3, \ref{p10} show that there exists $\psi\in\Xf$ such that $\psi\mathcal{Q}\psi\inv\subset\Tf_X$. Hence
$$\Phi(\psi) Q(\Phi(\psi))\inv =\Phi(\psi)\Phi(\mathcal{Q})(\Phi(\psi))\inv=\Phi(\psi \mathcal{Q}\psi\inv)\subset\Phi(\mathcal{T}_X)=T_X.$$
\end{proof}
We can now prove Theorem \ref{tM}. 

\begin{proof}
Let $L=\Wn,$ $\Sn$, $\Hn$ or $\Kn$. Suppose $\Lgrad$ is a $G$-grading where $G$ is a group without elements of order $p$. Without loss of generality, we assume that the support of the grading generates $G$. Let $\eta:\widehat G\to\Aut L$ be the corresponding embedding and $Q:=\eta(\widehat G)$. Then $Q$ is a quasi-torus in $\Aut L$. By Proposition \ref{c2}, we can conjugate $Q$ by an automorphism $\Psi$ of $L$ so that $\Psi Q\Psi\inv\subset T_X$. It follows from Lemma \ref{le} that the grading $L\grad L\prim_g$, where $L\prim_g=\Psi(L_g)$, is a standard grading.
\end{proof}

\end{document}